\documentclass[11pt,oneside,reqno]{amsart}
\usepackage{amsmath,amssymb,amsthm}
\usepackage{arydshln}
\usepackage[english]{babel}
\usepackage{pifont, verbatim}
\usepackage{mathrsfs}
\usepackage{tikz-cd}
\usepackage{stmaryrd}
\textheight=22.5cm
\textwidth=16.5cm
\hoffset=-1.5cm
\topmargin=-0.3cm 
 2

\setlength{\arrayrulewidth}{.1mm}
\setlength{\tabcolsep}{12pt}

\usepackage[colorlinks=true,linkcolor=blue,citecolor=blue,pdfpagelabels=false]{hyperref}
\newtheorem{thm}{Theorem}[section]
\newtheorem{lem}[thm]{Lemma}
\newtheorem{prop}[thm]{Proposition}

\newtheorem{cor}[thm]{Corollary}
\newtheorem{defn}[thm]{Definition}
\newcommand{\thmref}[1]{Theorem~\ref{#1}}
\newcommand{\defref}[1]{Definition~\ref{#1}}
\newcommand{\lemref}[1]{Lemma~\ref{#1}}

\newcommand{\propref}[1]{Proposition~\ref{#1}}
\newcommand{\corref}[1]{Corollary~\ref{#1}}

\newtheorem{rmk}[thm]{Remark}

\newenvironment{acknowledgements}{\bigskip\textbf{Acknowledgements.}}{}

\newcommand{\nequiv}{\not\equiv}
\renewcommand{\geq}{\geqslant}

\begin{document}

\baselineskip=17pt

\title[Reductions of Galois representations]
{Reductions of Galois representations and the Theta operator}
\author[E. Ghate and A. Kumar]{Eknath Ghate and Arvind Kumar}

\address[Eknath Ghate]{School of Mathematics, Tata Institute of Fundamental Research, Homi Bhabha Road, Mumbai 400005, India.}
\email{eghate@math.tifr.res.in}

\address[Arvind Kumar]
{Einstein Institute of Mathematics, The Hebrew University of Jerusalem, Edmund Safra Campus, Jerusalem 91904, Israel. \newline {\em Current Address: } 
{Department of Mathematics, Indian Institute of Technology Jammu, Jagti NH-44, PO Nagrota, Jammu 181221, India.}}
%\email{arvind.kumar@mail.huji.ac.il}
% {Department of Mathematics, Indian Institute of Technology Jammu, Jagti NH-44, PO Nagrota, Jammu 181221, India.}}
\email{arvind.kumar@iitjammu.ac.in}

\subjclass[2010]{Primary 11F80; Secondary 14G22, 11F33}
%\date{\today}

\keywords{Reductions of Galois representations, Coleman families, Theta operator}

\begin{abstract}
Let $p\ge 5$ be a prime, and let $f$ be a cuspidal eigenform of weight at least $2$ and level coprime to $p$ of finite slope $\alpha$.
Let $\bar{\rho}_f$ denote the mod $p$ Galois representation associated with $f$ and $\omega$  
the mod $p$ cyclotomic character. Under an assumption on the weight of $f$, we prove that there exists a cuspidal eigenform  
$g$ of weight at least $2$ and level coprime to $p$ of slope $\alpha+1$ such that 
$$\bar{\rho}_f \otimes \omega \simeq \bar{\rho}_g,$$ up to semisimplification.
The proof uses Hida-Coleman families and the theta operator acting on 
overconvergent forms. The structure of the reductions of the local Galois representations 
associated to cusp forms with slopes in the interval $[0,1)$ were determined by Deligne, Buzzard and Gee and for slopes in
 $[1,2)$  by Bhattacharya, Ganguli, Ghate, Rai and Rozensztajn. We show that these reductions, in spite of their somewhat complicated 
behavior, are compatible with the displayed equation above. Moreover, the displayed equation above allows us to predict the shape 
of the reductions of a class of Galois representations attached to eigenforms of slope larger than $2$. 
Finally, the methods of this paper allow us to obtain upper bounds on the radii of certain Coleman families. 
\end{abstract}
\maketitle

\section{Introduction}
Let $p$ be a prime and let $\bar{\rho}:{\rm Gal}(\bar{\mathbb{Q}}/\mathbb{Q}) \rightarrow {\rm GL}_2(\overline{\mathbb{F}}_p)$ 
be a continuous, absolutely irreducible, two-dimensional, odd, mod $p$ Galois representation. Such a representation 
is said to be of Serre-type. Serre's modularity conjecture in its qualitative form claims that every $\bar{\rho}$ of Serre-type 
is of the form $\bar\rho_f$ for some eigenform $f$. The refined or quantitative form of the conjecture also specifies a minimal 
weight $k(\bar{\rho})$, known as the Serre weight, and a level $N(\bar{\rho})$ for $f$. The level $N(\bar{\rho})$ 
is taken to be the Artin conductor of $\bar{\rho}$ outside $p$, whereas the weight $k(\bar{\rho})$ is built out of information on 
the ramification of $\bar\rho$ at $p$. This conjecture is now a theorem due to the celebrated work of 
Khare \cite{kha}, Khare-Wintenberger \cite{khw} and Kisin \cite{kis}.
	
Now suppose $f$ is an eigenform of weight $k \ge 2$ with $\bar{\rho}_f$ absolutely irreducible. Let $\omega$ denote the mod $p$ cyclotomic character. Since 
$\bar\rho_f$ is of Serre-type, the twisted representation $\bar{\rho}_f \otimes \omega$ is again of Serre-type, hence by Serre's modularity conjecture, it 
arises from an eigenform, say $g$, i.e., 
$$
\bar{\rho}_f \otimes \omega \simeq \bar{\rho}_g.
$$
Serre's conjecture gives the minimal weight, level and character of the eigenform $g$ from the corresponding data for $f$. It would 
be interesting to investigate how the slope of $g$ depends on the slope of $f$. The conjecture does not give 
any information about this. In many cases, computational evidence suggests that
\begin{eqnarray}
 \label{slope f vs g}
    \text{ slope of } g & = & \text{ slope of } f + 1,
\end{eqnarray}
if $f$ and $g$ are normalized to have first Fourier coefficient $1$.
But in fact \eqref{slope f vs g} is not always true, see Sec. \ref{serexmp} for some examples. 
However, we prove the following general result. 
%which shows that such a $g$ exists under some mild assumptions.

\begin{thm}\label{main1}
Let $p \ge 5$ be a prime and $N$ be a positive integer such that $(p,N)=1$. Suppose that $f\in S_k(N,\chi)$ is an eigenform of weight $k \ge 2$, level $N$,
character $\chi$ and slope $\alpha$ with a $p$-stabilization $f_k$ of slope $\alpha$, and suppose 
$\bar{\rho}_f$ denotes the mod $p$ Galois representation associated with $f$. Let $M_{f_{k}}$ %= \lceil r_{f_k} \rceil$ 
be the non-negative integer in Definition~\ref{radius} 
and let $\delta_{f_k}$ be the Kronecker delta function defined in \eqref{delta}.  
If $k \equiv 2-\kappa  \mod p^{M_{{f_k}} + \delta_{f_k}}$  with $\kappa \in \{2,3, \dots, p^{M_{f_{k}} + \delta_{f_k}}+1 \}$, 
%If $k \equiv 2-\kappa  \mod p^{M_{{f_k}} + \delta(r_{f_k}) }$  with $\kappa \in \{2,3, \dots p^{M_{f_{k}} + \delta(r_{f_k})  }+1 \}$, 
then there is an eigenform $g\in S_l(N,\chi)$ of slope $\alpha+ \kappa- 1$ such that
\begin{equation}\label{Maineqn}
 \bar{\rho}_f \otimes \omega^{\kappa-1} \simeq \bar{\rho}_g,
\end{equation}
up to semisimplification.
%where the non-negative integer ${M_\mathcal{F}}$ is the radius of a Coleman family $\mathcal{F}$ passing through the $p$-stabilization of $f$ (see \ref{radius}). 
Moreover, if $f$ is a newform, we may choose $g$ to be a newform.
\end{thm}

\noindent If the slope $\alpha$ of $f$ is smaller than $\frac{k-1}{2}$, there is always a $p$-stabilization $f_k$ of $f$ of slope $\alpha$, so in many cases
the hypothesis imposed on the $p$-stabilization in Theorem~\ref{main1} holds automatically. Also, there is (another) non-negative 
integer $M$ %(this turns out to be $M_{\theta^{\kappa-1} f_{2-\kappa}}$ in the notation of Section 2) 
such that the weight $l$ of $g$ in Theorem~\ref{main1} can be chosen to be 
any integer satisfying the following conditions:
\begin{enumerate}
 \item [(i)] $l > 2\alpha +2\kappa,$
\item[{(ii)}] $l =(k-2+\kappa)p^{M}+\kappa +n(p-1)p^{M}$, for any $n\in \mathbb{Z}$.
 \end{enumerate}
%Moreover, the weight $l$ of $g$ has been obtained in \thmref{main22}.
 
The simplest case of the theorem above is the case $\kappa=2$, which is of special interest.
\begin{cor}\label{main}
Let $p \ge 5$ be a prime and $N$ be a positive integer such that $(p,N)=1$. 
Suppose that $f\in S_k(N,\chi)$ is an eigenform of finite slope $\alpha$ as in Theorem~\ref{main1}. 
%with a $p$-stabilization of slope $\alpha$. 
%Let $\bar{\rho}_f$ denotes the mod $p$ Galois representation associated with $f$. 
If $\alpha > 0$,  assume that $k \equiv 0 \mod p^{M_{f_k} + \delta_{f_k}}$. 
Then there is an eigenform $g\in S_l(N,\chi)$ of slope $\alpha +1$ such that
\begin{equation}\label{maineqn}
 \bar{\rho}_f \otimes \omega \simeq \bar{\rho}_g,
\end{equation}
up to semisimplification.
%Moreover, the weight $l$ of $g$ can be chosen as above. %in \thmref{main1}.
%\begin{enumerate}
% \item [(i)] $l > 2\alpha +4,$
%\item[{(ii)}] $l =kp^{M_\mathcal{G}}+2 +n(p-1)p^{M_\mathcal{G}}$, for any $n\in \mathbb{Z}$.
% \end{enumerate}
\end{cor}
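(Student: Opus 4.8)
The plan is to read off \corref{main} as the case $\kappa = 2$ of \thmref{main1}; the only work is to verify that the hypotheses of the theorem hold with this value of $\kappa$. That $\kappa = 2$ is an admissible choice is immediate: since $f$ is assumed to have a $p$-stabilization $f_k$ of slope $\alpha$, the exponent $M_{f_k}+\delta(r_{f_k})$ in \defref{radius} is a non-negative integer, so $p^{M_{f_k}+\delta(r_{f_k})}+1 \geq 2$ and $\kappa = 2$ lies in $\{2,3,\dots,p^{M_{f_k}+\delta(r_{f_k})}+1\}$.

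It remains to check the congruence $k \equiv 2-\kappa \pmod{p^{M_{f_k}+\delta(r_{f_k})}}$, which for $\kappa = 2$ is just $k \equiv 0 \pmod{p^{M_{f_k}+\delta(r_{f_k})}}$. If $\alpha > 0$ this is exactly the hypothesis placed on $f$ in the statement of \corref{main}, so there is nothing to do. If $\alpha = 0$, then $f_k$ is the ordinary $p$-stabilization of $f$ and the Hida--Coleman family through $f_k$ is defined over the whole of weight space; consulting \defref{radius} in this situation one should find $M_{f_k}+\delta(r_{f_k}) = 0$, whence the modulus equals $1$ and the congruence is automatic. Pinning this last fact down against \defref{radius} is the only real step in the deduction, and I expect it to be the place that needs a moment's care --- everything of substance having been absorbed into \thmref{main1}.

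With the hypotheses in hand, \thmref{main1} applied with $\kappa = 2$ produces an eigenform $g \in S_l(N,\chi)$ --- a newform when $f$ is one --- of slope $\alpha+\kappa-1 = \alpha+1$ satisfying $\bar{\rho}_f \otimes \omega = \bar{\rho}_f \otimes \omega^{\kappa-1} \simeq \bar{\rho}_g$ up to semisimplification, and with $l$ free to be chosen subject to conditions (i)--(ii) following \thmref{main1} (with $\kappa = 2$). This is precisely \corref{main}. No further obstacle is anticipated: the actual mechanism --- threading $f_k$ through a Hida--Coleman family, twisting by the theta operator on overconvergent forms so as to tensor the attached Galois representation with $\omega$, and using the radius $r_{f_k}$ to keep the slope small enough to force classicality of the resulting eigenform --- is the content of \thmref{main1} and is used here as a black box.
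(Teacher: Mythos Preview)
Your deduction is correct and matches the paper's approach: both take $\kappa=2$ in \thmref{main1}. Your handling of the $\alpha=0$ case is right --- by Hida theory the family is defined over all of $\mathfrak{B}^*$, so $r_{f_k}=-1+\tfrac{1}{p-1}\notin\mathbb{Z}$, giving $M_{f_k}=\lceil r_{f_k}\rceil=0$ and $\delta(r_{f_k})=0$, whence the congruence modulus is $1$; the paper notes $M_{f_k}=0$ after \defref{radius} but leaves the $\delta$-term implicit, so your care there is warranted and correct.
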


Let $f$ be as in \corref{main}. Then there are three forms satisfying condition \eqref{maineqn}, which are natural choices for the form $g$ in \corref{main}. These are
\begin{enumerate}
 \item[(a)]
 $\theta f$, where the theta operator $\theta=q\frac{d}{dq}$ is defined on $q$-expansions by
\begin{equation}\label{theta}
\theta\left( \sum_{n = 0}^\infty  a_nq^n  \right)= \sum_{n = 0}^\infty na_nq^n,
\end{equation}
 \item[(b)] 
 a minimal weight form associated to $\bar{\rho}_f \otimes \omega$ by Serre's conjecture if $\bar\rho_f$ is irreducible,
 \item[(c)]
$f_\omega := f \otimes \omega$, the twist of $f$ by the (Teichm\"uller lift of the) character $\omega$. 
\end{enumerate}
But in  \corref{main}, we cannot simply take $g$ to be any of the forms above. The form $\theta f$ in (a) has the right slope 
$\alpha+1$ but it is not a classical eigenform %(rather it is a $p$-adic eigenform) 
because, e.g., the local Galois representation 
corresponding to $\theta f$ is still crystalline but has  Hodge-Tate weights $(1,k)$ instead of $(0,l)$, for some integer $l$.
Furthermore, the form obtained from Serre's conjecture in (b) and the twisted form $f_\omega$ in (c) are classical but they do not 
have the right slopes. Indeed, as mentioned above (see Sec.~\ref{serexmp}), the slope of the form obtained in (b) is not necessarily $\alpha+1$ whereas
the form in (c) has infinite slope. However, in this paper we develop a method to construct a classical form $g$ 
of finite slope $\alpha + 1$ which is closely related to all three forms above in the sense that $g \equiv \theta f \mod p$. 
%, proving \corref{main}.
In fact, more generally,  given an eigenform $f$ of slope $\alpha$ as in \thmref{main1}, we prove  in Sec.~\ref{section main} that 
there exists an eigenform $g$ of 
slope $\alpha +\kappa-1$ such that
\begin{eqnarray}
  \label{g cong theta f}
g \equiv \theta^{\kappa-1} f \mod p
\end{eqnarray}
(see \thmref{main22}). Theorem~\ref{main1} and Corollary~\ref{main} now follow immediately from \eqref{g cong theta f}.
The proof of \eqref{g cong theta f} uses families of overconvergent eigenforms and congruences between them. We spend some time
recalling the necessary background about such families in Sec.~\ref{Colemanfamilies}. Finally, we remark that
while the forms $f$ and $f_k$ satisfying the congruence conditions on the weight $k$ in Theorem~\ref{main1} and \corref{main} may not be
typical, examples of such forms are not hard to write down under some plausible
assumptions on the size of $M_{f_k}$.

One of the chief motivations of this paper was to develop and use a result like \corref{main} to study the images of the reductions of local
Galois representations associated with eigenforms of arbitrary weights and slopes.
Assume $p \ge 5$. Given a Galois representation $\bar{\rho}_f:{\rm Gal}(\bar{\mathbb{Q}}/\mathbb{Q}) \rightarrow {\rm GL}_2(\overline{\mathbb{F}}_p)$ coming from an eigenform $f$, we can restrict it to the subgroup $G _p:={\rm Gal}(\bar{\mathbb{Q}}_p/\mathbb{Q}_p)$ to get a local representation $\bar{\rho}_{f,p}:=\bar{\rho}_f|_{G_p}.$ This representation has been much studied, but its shape is not known in general. However, if $f$ and $g$ are as in Corollary~\ref{main}, 
then clearly $\bar{\rho}_{f,p} $ is irreducible (respectively, reducible) if and only if $\bar{\rho}_{g,p}$ is irreducible (respectively, reducible). So we obtain the following.
\begin{cor}
  \label{irred vs red}
  The phenomenon of irreducibility (respectively, reducibility) of the reduction of local modular Galois representations tends to propagate as the slope
  increases by one.
\end{cor}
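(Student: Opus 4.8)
The plan is to read the statement off directly from \corref{main}, together with the elementary fact that twisting by a character does not affect irreducibility. First I would take an eigenform $f\in S_k(N,\chi)$ of finite slope $\alpha$ satisfying the hypotheses of \corref{main} (in particular the condition on the $p$-stabilization $f_k$, and, when $\alpha>0$, the congruence $k\equiv 0\bmod p^{M_{f_k}+\delta(r_{f_k})}$ on the weight). Corollary~\ref{main} then produces an eigenform $g\in S_l(N,\chi)$ of slope $\alpha+1$ with $\bar\rho_f\otimes\omega\simeq\bar\rho_g$ up to semisimplification. Restricting this isomorphism to the decomposition subgroup $G_p={\rm Gal}(\bar{\mathbb{Q}}_p/\mathbb{Q}_p)$ and writing $\bar\rho_{f,p}=\bar\rho_f|_{G_p}$, $\bar\rho_{g,p}=\bar\rho_g|_{G_p}$, I would obtain
\[
\bar\rho_{f,p}\otimes\omega\simeq\bar\rho_{g,p}
\]
up to semisimplification, since restriction to $G_p$ commutes both with twisting by $\omega$ (now viewed as the local mod $p$ cyclotomic character) and with passing to semisimplifications.

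Next I would observe that for a two-dimensional representation $V$ of $G_p$ over $\overline{\mathbb{F}}_p$, $V$ is irreducible if and only if its semisimplification $V^{\mathrm{ss}}$ is irreducible: an irreducible $V$ is already semisimple, while a reducible $V$ has a one-dimensional subrepresentation, hence so does $V^{\mathrm{ss}}$. Moreover tensoring with the one-dimensional representation $\omega$ is an auto-equivalence of the category of $G_p$-representations, so it carries irreducibles to irreducibles and reducibles to reducibles. Combining these two remarks with the displayed isomorphism shows that $\bar\rho_{f,p}$ is irreducible (resp. reducible) exactly when $\bar\rho_{g,p}$ is, which is the assertion.

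There is no genuine obstacle in this argument; the only point worth flagging is that the passage from slope $\alpha$ to slope $\alpha+1$ is only available when the hypotheses of \corref{main} are met — the $p$-stabilization condition and, for $\alpha>0$, the congruence on $k$ in terms of $M_{f_k}$ and $\delta(r_{f_k})$. This conditional nature is precisely why the corollary is phrased with ``tends to'': for eigenforms $f$ to which \corref{main} applies, irreducibility (resp. reducibility) of the local reduction $\bar\rho_{f,p}$ is inherited by $\bar\rho_{g,p}$ one slope higher, but we make no such claim for an arbitrary eigenform of arbitrary weight and slope.
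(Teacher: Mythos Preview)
Your proposal is correct and follows essentially the same approach as the paper: the paper simply observes, just before stating the corollary, that if $f$ and $g$ are as in \corref{main} then $\bar\rho_{f,p}$ is irreducible (resp.\ reducible) if and only if $\bar\rho_{g,p}$ is, which is exactly the twist-and-restrict argument you wrote out. Your version is a bit more explicit about the semisimplification and the meaning of ``tends to,'' but there is no substantive difference.
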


To say more, we work in the more general setting of crystalline representations of $G_p$. Let $I_p \subset G_p$ denote the inertia subgroup at $p$.
%$I_{p,w}$, the wild inertia subgroup and $I_{p,t}: = I/I_{p,w}$, the tame ramification group of $G_p$. 
Let $\omega_2$ denote the mod $p$ fundamental character of level $2$ of $G_{p^2} = {\rm Gal}(\bar{\mathbb{Q}}_p/\mathbb{Q}_{p^2}) \subset G_p$. 
If $t$ is an integer with $p+1 \nmid t$, let ${\rm ind}(\omega_2^t)$ be the unique irreducible 
two-dimensional mod $p$ representation of $G_p$, with determinant $\omega^t$ and with restriction to $I_p$ given by $\omega_2^{t} \oplus \omega_2^{pt}$.
Let $E$ be a finite extension of $\mathbb{Q}_p$ and $a_p \in \mathfrak{m}_E$, the maximal ideal in the ring of integers $\mathcal{O}_E$. 
Let $v$ denote the normalized $p$-adic valuation so that $v(p) = 1$. For $k\ge 2$, let $V_{k,a_p}$ be the irreducible 
crystalline representation of $G_p$ defined over $E$, with Hodge-Tate weights $(0, k - 1)$ and slope $v(a_p)$, 
such that $D_{\rm cris}(V^*_{k,a_p}) = D_{k,a_p}$, where $D_{k,a_p} = Ee_1 \oplus Ee_2$ is the filtered $\phi$-module defined in \cite{ber}. 
Let $\bar{V}_{k,a_p}^{ss}$ denote the semisimplification of the mod $p$ reduction of 
any $G_p$-stable $\mathcal{O}_E$-lattice in $V_{k,a_p}$; it is independent of the choice of the lattice. 
For any normalized eigenform $f=\sum_{n= 1}^\infty a_nq^n \in S_k(N)$, with $k \ge 2$, $(p,N)=1$, $\chi = 1$ and slope $v(a_p) > 0$, it is known that
$$
\bar{\rho}_{f,p} \simeq \bar{V}_{k,a_p}^{ss}.
$$
%provided $a_p^2 \ne 4p^{k-1}$, a condition we will assume always holds.

For a fixed $k$ and $a_p$, there are only finitely many possibilities for $\bar{V}_{k,a_p}^{ss}$, up to unramified characters. But computations show that the behavior of this reduction is quite mysterious. The shape of $\bar{V}_{k,a_p}^{ss}$ is known when $k$ is small $(k \le 2p + 1)$, by the work of Fontaine, Edixhoven and Breuil \cite{edi}, \cite{bre}. On the other hand, Berger, Li and Zhu \cite{blz} computed its shape when the slope is large compared to $k$, that is, when $v(a_p) > \lfloor  \frac{k-2}{p-1}\rfloor$. Deligne  \cite{del} obtained the shape of $\bar\rho_{f,p}$ in the case $v(a_p)=0$. Recently, Buzzard and Gee \cite{buzgee1, buzgee2} computed the reduction $\bar{V}_{k,a_p}^{ss}$, when $v(a_p) \in (0,1)$ and  Bhattacharya, Ganguli, Ghate, Rai and Rozensztajn \cite{GG15, bg15, bgr18, gr19} computed the reduction if $v(a_p) \in [1, 2)$, for all weights $k \ge 2$.

 In the remainder of this section, 
we assume $f$ and $g$ to be eigenforms as in \corref{main} of
slope $\alpha$ and $\alpha +1$, respectively, so $\bar{\rho}_f \otimes \omega \simeq \bar{\rho}_g$. We also assume $\chi = 1$ for simplicity.
Let $\alpha \in [0,1)$. Since the structure of $\bar{\rho}_{f,p}$ is known due to Deligne, Buzzard and Gee,
we immediately get the structure of $\bar{\rho}_{g,p}$, by \corref{main}. 
On the other hand, the structure of $\bar{\rho}_{g,p}$ has been independently and directly computed by the authors above.  
Hence, we can compare the two results regarding the structure of $\bar{\rho}_{g,p}$, %for $\alpha+1 \in [1,2)$
one derived from the structure of the smaller slope eigenform $f$ and Corollary~\ref{main}, and the other directly from results 
in the literature. We do this in Sec.~\ref{comp}. 
In all cases (see Tables 1-3 in Sec. \ref{slopes in [0,1)}), the two methods to compute the structure of $\bar\rho_{g,p}$ are compatible (as they should be!).
We can use \corref{main} to partially predict the structure of $\bar{\rho}_{g,p}$ for forms $g$ of slope in $[2,\infty)$, a range of slopes 
for which the shape of the reductions have yet to be determined completely. We illustrate this in Sec. \ref{extrap} for slope in $[2,3)$.  
Recently, the first author made a general conjecture, known as the zig-zag conjecture, describing the 
reductions of local Galois representations associated with cusp forms of positive half-integral slopes $\le \frac{p-1}{2}$ and exceptional weights, 
for which computing the reduction is the trickiest (see \cite{gha19}). We show that \corref{main} is compatible with the zig-zag conjecture 
in the cases where the reduction is known (cf. Secs. \ref{slope 0} and \ref{slope 1/2}).  In \cite{gha19}, it was shown
that \corref{main} is compatible with the zig-zag conjecture in general (cf. Sec. \ref{zigzag}). 
 
Coleman families are not defined on all of weight space, whence the notion of the Coleman radius $p^{-r}$ of a Coleman family 
of finite slope $\alpha$, where $r$ is a rational number in the range $(-1+\frac{1}{p-1}, \infty)$. Gouv\^ea-Mazur \cite{gm92}
gave a conjectural upper bound $\lceil \alpha \rceil$ for a quantity closely related to $r$, 
% One can check M_{GM} (smallest M for which d(k,\alpha) = d(k',alpha) for all k' \equiv k mod p^M) satisfies 
% M \leq r, but equality is not so clear.
which was subsequently shown to be not 
valid in all cases by Buzzard-Calegari \cite{bc}. Wan \cite{wan} showed that this quantity is bounded above by a quadratic polynomial 
in $\alpha$. This should provide lower bounds for the largest Coleman radius $p^{-r}$ of a Coleman family 
passing through a form of finite slope $\alpha$.  
Since the radius of the Coleman family passing through the cusp form $g$ in \corref{main} is involved in defining its weight,
as an application of the above-mentioned compatibility, in Sec. \ref{lb} we obtain an {\it upper bound} for the radius of the Coleman 
family passing through $g$, when %$f$ has slope in $[0,1)$ and 
$g$ has slope in $[1,2)$. 
%We remark that Bergdall has obtained general upper bounds for Coleman radii in \cite{ber17}.

Let $f$ be an eigenform of finite slope $\alpha$ such that $\bar\rho_f$ is irreducible.  
We end this paper in Sec.~\ref{serexmp} by giving examples of cases where the minimal weight eigenform 
$h$ associated to $\bar{\rho}_f \otimes \omega$ by Serre's conjecture does not have slope $\alpha + 1$. This shows 
that the constructions made to prove Theorem~\ref{main1} and Corollary~\ref{main} are of some importance.

%The main aim of this article is to apply \thmref{main} to obtain some information on the images of the reductions of local Galois representations associated with eigenforms of arbitrary weights and small slopes. Let $f$ and $g$ be two eigenforms of slope $\alpha$ and $\alpha +1$ resp. which are related by \thmref{main}, i.e., $\bar{\rho}_f \otimes \omega \simeq \bar{\rho}_g$. Since  the structure of $\bar{\rho}_{f,p}$ is known for $\alpha \in [0,2)$, we immediately get the structure of $\bar{\rho}_{g,p}$ for $\alpha+1 \in [1,3)$. On the other hand, the structure of the local Galois representations of $\bar{\rho}_g$ have been independently and directly computed by Bhattacharya, Ghate, Rozensztajn and Rai for $\alpha+1\in [1,2)$. Hence, we can compare the two results regarding the structure of $\bar{\rho}_{g,p}$ for $\alpha+1 \in [1,2)$, one derived from the information of structure of smaller slopes eigenforms  with the other direct results obtained from the literature. In all the cases that we have checked (see Tables 1-3 in \S \ref{comp}), the two results are compatible. Furthermore, we can use \thmref{main} to predict some of the answers for the structure of local Galois representation of $\bar{\rho}_{g,p}$ (for slope $\ge 2$) which is not known, e.g., (see \S \ref{extrap}) we can extrapolate some information on the structure of $\bar{\rho}_{g,p}$ for a form $g$ of slope $\alpha+1\ge2$. 

\section{Overconvergent modular forms}
 \label{Colemanfamilies}

In this section, we recall some basic results on families of overconvergent $p$-adic modular forms that will be needed in our proof.
%
%Fix $p \ge 5$ and $N \ge 1$ such that $p \nmid N$. 
The definition of overconvergent modular forms was first given by Katz (see, e.g., \cite[Sec. 2]{wan}). 
Katz's overconvergent modular forms are  defined only for integral weights. 
%Serre defined $p$-adic modular forms with weights in $\mathbb{Z}_p \times \mathbb{Z}/(p - 1)\mathbb{Z}$.  
In \cite{col97}, Coleman defined overconvergent modular forms of tame level $N$ of 
weights $\kappa \in \mathcal{W}:= {\rm Hom}(\mathbb{Z}_{p}^*,\mathbb{C}_{p}^*)$, the $p$-adic weight space, which is a $p$-adic rigid analytic space. 
These forms incorporate the forms of Katz, %and Serre.
since we have an embedding $\mathbb{Z} \hookrightarrow  \mathcal{W}$ sending $k\in \mathbb{Z}$ to the character $a\mapsto a^k$, for $a\in \mathbb{Z}_p^*$. 
Coleman's definition of overconvergent modular forms of integral weights is geometric, while forms of general weight in $\mathcal{W}$ 
are defined using powers of a weight $1$ Eisenstein series.  %(see Definition \ref{gweight}). 
%he uses the $s$-th power of a weight 
%$1$ Eisenstein series to define these forms for general weight $(s,i)$ (see Definition \ref{gweight}). 
We recall the definition of overconvergent forms  following Coleman \cite{col97}.  
The reader is referred to \cite{col96, col97} for a more systematic treatment, and \cite{BGR} for background on rigid analysis.

Let $N \ge 1$ be the tame level and $p\ge 5$ a prime that is relatively prime to $N$.  
Let $I_m:= \{v \in \mathbb{Q}: 0\le  v < \frac{p^{2-m}}{p+1}\}$ and $I_m^*=I_m \backslash \{0\}$, for an integer
$m \ge 1$. 
Let $A$ be a lift of the Hasse invariant to the modular curve $X_1(N)$ (since $p \ge 5$, we can take $A=E_{p-1}$, the Eisenstein series of level 1 and weight $p-1$). 
Then, for $v \in I_1$,
$$
X_1(N)(v):= \{x\in X_1(N): v(A(x)) \le v \}
$$
is an affinoid subdomain of $X_1(N)$. Using the canonical subgroup, we may regard $X_1(N)(v)$ as an affinoid subdomain of  $X_1(N;p)=X(\Gamma_1(N)\cap \Gamma_0(p))$, 
by \cite[Sec. 6]{col96}. Let $X_1(Np)(v)$ be the affinoid subdomain of $X_1(Np)$ which is the inverse image of $X_1(N)(v)$ under the natural forgetful map from 
$X_1(Np)$ to $X_1(N;p)$. Let $\omega$ denote the invertible sheaf on $X_1(Np)$ defined as in \cite[Sec. 2, 8]{col96}. On the non-cuspidal locus, $\omega$ 
is the push-forward of the sheaf of relative invariant differentials of the universal elliptic curve. Then, for $k\in \mathbb{Z}$,
$$
M_{k}^\dagger(v):=\omega^k(X_1(Np)(v))
$$
is the space of $v$-overconvergent modular forms of weight $k$ on $\Gamma_1(Np)$ which converge on $X_1(Np)(v)$ \cite[p. 449]{col97}. 
If $0 < v' < v$, then there is an injection $M_k^\dagger(v)\hookrightarrow M_k^\dagger(v')$ and their direct limit
$$
M_k^\dagger(N)=\varinjlim_{v>0}M_k^\dagger(v)
$$
is the space of overconvergent modular forms of weight $k$ and level $Np$. Let $S_k^\dagger(N)$ denote the subspace of cusp forms in $M_k^\dagger(N)$ (the subspace of functions vanishing at the cusps in $X_1(Np)(0)$). 
%The notion of $v$-overconvergent modular forms for level $\Gamma_1(Np^m)$, for $v \in I_m$, can be defined similarly, for all $m \ge 1$ (see
%\cite[p. 450]{col97}), though we omit it since we do not need it in this paper. 
Every  overconvergent modular form has a $q$-expansion and the $v$-adic valuation of the $p$-th Fourier coefficient of a (normalized) form is called the 
{\it slope} of the form.
%s at every cusp in $X_1(Np)(v)$.

Note that $M_k^\dagger(N)$ is an infinite-dimensional $p$-adic Banach space and contains the classical modular forms of weight $k$ and level $Np$. 
%Classical forms are overconvergent: $M_k(\Gamma_1(N)) \subset M_k^\dagger(N)$.
Using the Hodge theory of modular curves, Coleman \cite[Theorem 8.1]{col96} proved the following `control theorem'.
 
\begin{thm}\label{classicity}
  An overconvergent modular form of weight $k$ and tame level $N$ %which is an eigenform for $U_p$, 
  is a classical modular form if its slope is strictly less than $k-1$. 
%If its slope is precisely $k-1$, unless it is in the image of the theta operator $\theta^{k-1}$, it is a classical
%modular form.
\end{thm}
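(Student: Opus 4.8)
The plan is to follow Coleman's cohomological argument. Since the compact operator $U_p$ acts on the $p$-adic Banach space $M_k^\dagger(N)$ with a discrete spectrum of finite slopes, it suffices to prove that the slope-$<k-1$ part of $M_k^\dagger(N)$ --- the (finite-dimensional) span of the generalized $U_p$-eigenvectors associated to eigenvalues of valuation $<k-1$ --- is contained in the subspace $M_k(Np)$ of classical forms; equivalently, that the cokernel of $M_k(Np)\hookrightarrow M_k^\dagger(N)$ carries only $U_p$-eigenvalues of valuation $\geq k-1$. First I would realize both sides inside de Rham cohomology by means of the theta operator, and then compare the rigid-analytic cohomology of a wide open neighbourhood of the ordinary locus with the algebraic de Rham cohomology of the whole modular curve, on which the low-slope part is manifestly classical.

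\textbf{The theta exact sequence.} Let $\mathcal{H}$ be the rank-two relative de Rham bundle of the universal elliptic curve over $X_1(Np)$, with its Hodge filtration ($\mathrm{Fil}^1\mathcal{H}=\omega$) and Gauss--Manin connection, and put $\mathcal{L}_{k-2}=\mathrm{Sym}^{k-2}\mathcal{H}$, a bundle of rank $k-1$ with induced connection $\nabla$ whose top filtered piece is $\omega^{k-2}$. The Kodaira--Spencer isomorphism $\Omega^1(\log\,\mathrm{cusps})\cong\omega^{\otimes 2}$ identifies $\omega^{k-2}\otimes\Omega^1(\log)$ with $\omega^{k}$, so weight-$k$ forms sit inside the degree-one term of the two-step de Rham complex $\mathcal{L}_{k-2}\xrightarrow{\ \nabla\ }\mathcal{L}_{k-2}\otimes\Omega^1(\log)$. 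Over a wide open neighbourhood $W$ of the ordinary locus, analysing this complex produces a $U_p$-equivariant exact sequence
\[
0 \longrightarrow M_{2-k}^\dagger(N) \xrightarrow{\ \theta^{k-1}\ } M_k^\dagger(N) \longrightarrow \mathbb{H}^1_{\mathrm{dR}}\bigl(W,(\mathcal{L}_{k-2},\nabla)\bigr),
\]
where $\theta^{k-1}$ is built from the $(k-1)$-fold Gauss--Manin connection and is given on $q$-expansions by $\bigl(q\tfrac{d}{dq}\bigr)^{k-1}$. From the twist relation $U_p\circ\theta^{k-1}=p^{k-1}\,\theta^{k-1}\circ U_p$, combined with the fact that $U_p$ acts with non-negative slopes on overconvergent forms of every weight, every $U_p$-eigenvalue occurring in the image of $\theta^{k-1}$ has valuation $\geq k-1$; this is precisely the source of the strict bound in the statement. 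Running the same construction on the \emph{proper} curve $X_1(Np)$ with algebraic de Rham cohomology gives the parallel exact sequence $0\to M_{2-k}(Np)\xrightarrow{\theta^{k-1}} M_k(Np)\to H^1_{\mathrm{dR}}\bigl(X_1(Np),(\mathcal{L}_{k-2},\nabla)(\log)\bigr)$, and restriction of differential forms furnishes a $U_p$-equivariant comparison map $H^1_{\mathrm{dR}}(X_1(Np),\cdots)\to\mathbb{H}^1_{\mathrm{dR}}(W,\cdots)$ compatible with the inclusion $M_k(Np)\hookrightarrow M_k^\dagger(N)$.

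\textbf{Conclusion and the main obstacle.} Let $F$ lie in the slope-$<k-1$ part of $M_k^\dagger(N)$; we may take it to be a (generalized) $U_p$-eigenform of slope $s<k-1$. Its image in $\mathbb{H}^1_{\mathrm{dR}}(W,(\mathcal{L}_{k-2},\nabla))$ cannot vanish, for otherwise $F$ would lie in the image of $\theta^{k-1}$, which contains no vector of slope $<k-1$; so $[F]$ is a non-zero class of slope $s<k-1$ in the rigid de Rham cohomology. The crux of the argument is the claim that the slope-$<k-1$ part of $\mathbb{H}^1_{\mathrm{dR}}(W,(\mathcal{L}_{k-2},\nabla))$ is finite-dimensional and coincides with the image of the algebraic $H^1_{\mathrm{dR}}(X_1(Np),\cdots)$. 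Granting this, $[F]$ comes from an algebraic de Rham class; projecting through the Hodge filtration and using the relation $v(u)+v(u')=k-1$ for the pair $\{u,u'\}$ of $U_p$-eigenvalues attached to each Hecke block (Eichler--Shimura), one sees that the slope-$<k-1$ part of $H^1_{\mathrm{dR}}(X_1(Np),\cdots)$ is represented by classical forms in $M_k(Np)$. Subtracting a suitable $F_0\in M_k(Np)$ then leaves $F-F_0$ in the image of $\theta^{k-1}$ and of slope $<k-1$, forcing $F=F_0$, which is classical. The two ingredients requiring genuine work are: (i) constructing the theta exact sequence and identifying the cokernel of $\theta^{k-1}$ with the rigid de Rham cohomology of $W$, which rests on the theory of the canonical subgroup and on overconvergence estimates in rigid geometry; and (ii) the comparison between $\mathbb{H}^1_{\mathrm{dR}}(W,\cdots)$ and $H^1_{\mathrm{dR}}(X_1(Np),\cdots)$. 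By excision the discrepancy between them is governed by the rigid cohomology of the supersingular annuli (one for each supersingular point) with coefficients in $(\mathcal{L}_{k-2},\nabla)$, and one must show that $U_p$ acts on these annular cohomology groups with all slopes $\geq k-1$. This last slope estimate --- extracted from the residues of $\nabla$ at the supersingular points and the way the $U_p$-correspondence acts on the annuli --- is, in my view, the main obstacle, and it is what pins the classicality bound at exactly $k-1$.
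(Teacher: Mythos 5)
The paper does not prove this statement at all---it is quoted as Coleman's control theorem \cite[Thm. 8.1]{col96}---and your outline is essentially Coleman's own cohomological argument from that reference: the $\theta^{k-1}$ exact sequence, the comparison of the rigid de Rham cohomology of a wide open neighbourhood of the ordinary locus with the algebraic de Rham cohomology of the proper curve, and the slope bound forced by $U_p\circ\theta^{k-1}=p^{k-1}\,\theta^{k-1}\circ U_p$ together with the analysis at the supersingular annuli. So the approach matches the cited source, and the two ingredients you flag as the main obstacles are exactly where the substantive work in Coleman's proof lies.
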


%\subsection*{Theta operator}

Let $\theta = q \frac{d}{dq}$ be the theta operator whose action on $q$-expansions is given by  \eqref{theta}.
We recall that a suitable power of $\theta$ preserves overconvergent forms (see \cite[Proposition 4.3]{col96} and \cite{colhl}).

\begin{thm}\label{thetaover}
Let $\kappa \ge 2$ be an integer and $f$ be an overconvergent modular form of weight $2-\kappa$ and tame level $N$ and some character. 
Then $\theta^{\kappa-1}f$ is also an overconvergent form of weight $\kappa$, tame level $N$ and the same character.  Moreover, $\theta^{\kappa-1}f$
is an eigenform if $f$ is an eigenform, and is $N$-new if $f$ is $N$-new. 
\end{thm}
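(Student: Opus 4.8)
The plan is to prove Theorem~\ref{thetaover} --- that $\theta^{\kappa-1}$ sends overconvergent forms of weight $2-\kappa$ to overconvergent forms of weight $\kappa$ --- by reducing to the known statement for a single application of $\theta$ and then iterating, keeping careful track of weights. First I would recall the basic identity underlying everything: on $q$-expansions, $\theta = q\,d/dq$, and the point is that $\theta$ is \emph{not} a map of the sheaf $\omega^k$ to itself but rather shifts the weight by $2$, i.e. it is naturally a derivation-type operator $\omega^k \to \omega^{k+2}$ on suitable overconvergent loci. The precise geometric input is Coleman's \cite[Prop.~4.3]{col96}: there is an operator $\theta: M_k^\dagger(N) \to M_{k+2}^\dagger(N)$ refining the classical theta operator, defined using the Gauss--Manin connection on the de Rham bundle of the universal elliptic curve together with the unit-root splitting (or the Hasse-invariant trivialization) over the overconvergent region $X_1(Np)(v)$ for $v$ sufficiently small; it preserves tame level, character, and the eigenform property for all Hecke operators away from $p$, and it commutes with $U_p$ up to the twist $U_p \theta = p\, \theta U_p$, which is exactly what shifts the slope by $1$.

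Second, I would address the subtlety that a \emph{single} $\theta$ does not land in a space of the correct weight to iterate with the classicity control theorem in mind, but iteration itself is formal: applying $\theta$ a total of $\kappa-1$ times to a form of weight $2-\kappa$ gives a form of weight $(2-\kappa) + 2(\kappa-1) = \kappa$, which is the claimed target weight. So the weight bookkeeping works out exactly. One must check that at each stage the form remains overconvergent; this is not automatic because $\theta$ may shrink the radius of overconvergence slightly, but since we apply it only finitely many times ($\kappa - 1$ times) and each application decreases $v$ by only a controlled amount (cf. \cite[Prop.~4.3]{col96}), the composite $\theta^{\kappa-1}f$ still converges on $X_1(Np)(v')$ for some $v' > 0$, hence lies in $M_\kappa^\dagger(N) = \varinjlim_{v>0} M_\kappa^\dagger(v)$. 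The preservation of the eigenform property for Hecke operators $T_\ell$ with $\ell \nmid Np$, of $U_\ell$ for $\ell \mid N$, and of the character follows at each step because $\theta$ commutes with all of these (it acts on $q$-expansions by $na_n$ on the $n$-th coefficient, and the relevant Hecke operators respect the grading in the obvious way), and $N$-newness is likewise preserved because $\theta$ is compatible with the degeneracy/Atkin--Lehner maps at primes dividing $N$.

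Third --- and this I expect to be the main point requiring care rather than the main \emph{obstacle} --- is verifying that the formal/analytic construction of $\theta$ on overconvergent forms, which a priori is defined for forms whose weight character is the algebraic character $a \mapsto a^k$, behaves well when the starting weight $2-\kappa$ is a non-positive integer. Classically $\theta$ applied to a form of non-positive weight can fail to be holomorphic, but in the overconvergent world the operator is still defined (the Gauss--Manin connection does not care about the sign of the weight), and the output has the algebraic weight $\kappa \geq 2$; the content of \cite[Prop.~4.3]{col96} together with \cite{colhl} is precisely that $\theta^{\kappa-1}$ restricted to weight $2-\kappa$ overconvergent forms produces genuine overconvergent forms of weight $\kappa$ --- in fact, this is the overconvergent incarnation of the relation between the $\theta$-operator and the operator $\mathrm{Fil}$-shift that is needed to make the Galois-side picture (Hodge--Tate weights $(1,k)$ for $\theta f$ versus $(0,l)$) coherent. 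So the proof is essentially: invoke the construction of \cite[Prop.~4.3]{col96}, iterate it $\kappa - 1$ times while tracking that $v' > 0$ survives finitely many radius shrinkages, and observe at each stage that weight adds $2$, character and tame level are unchanged, and the Hecke eigen/new properties pass through. The hardest part to write cleanly will be the uniform bound on how much $v$ must shrink under each application of $\theta$, so that $\theta^{\kappa-1}f$ is still overconvergent; but since $\kappa$ is fixed and finite this is a finite (and in \cite{col96, colhl} already established) estimate, not a genuine obstruction.
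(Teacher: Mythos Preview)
There is a genuine gap: a single application of $\theta$ does \emph{not} send $M_k^\dagger(N)$ to $M_{k+2}^\dagger(N)$ when $k\neq 0$, so your inductive scheme collapses at the first step whenever $\kappa\geq 3$. The cleanest way to see this is Ramanujan's identity $\theta\Delta = E_2\Delta$: if $\theta\Delta$ were overconvergent of weight $14$, then dividing by $\Delta$ (nowhere vanishing away from the cusps, and the ratio is visibly holomorphic there) would force $E_2\in M_2^\dagger$, contradicting the theorem of Coleman--Gouv\^ea--Jochnowitz that $E_2$ is \emph{not} overconvergent. Geometrically, the unit-root splitting of $H^1_{\mathrm{dR}}$ that you invoke to build a step-by-step $\theta$ lives only on the ordinary locus $X(0)$; its failure to extend over $X(v)$ for $v>0$ is exactly what the non-overconvergence of $E_2$ encodes. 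So the issue is not that ``$v$ shrinks by a controlled amount'' --- there is no $v'>0$ at all for the intermediate forms. You have also mis-read the reference: \cite[Prop.~4.3]{col96} is already the assertion about the full power $\theta^{\kappa-1}$, not a single-step building block.

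The paper itself gives no proof and simply cites \cite[Prop.~4.3]{col96} and \cite{colhl}. Coleman's argument constructs $\theta^{\kappa-1}$ \emph{in one stroke}: a section of $\omega^{2-\kappa}$ is carried, via the $(\kappa-1)$-fold Gauss--Manin connection on $\mathrm{Sym}^{\kappa-2}H^1_{\mathrm{dR}}$ followed by projection through the \emph{Hodge filtration} (which, unlike the unit-root splitting, is algebraic and defined on all of $X(v)$), to a section of $\omega^{\kappa}$. A Tate-curve computation then identifies this algebraically defined operator with $(q\,d/dq)^{\kappa-1}$ on $q$-expansions --- the $p$-adic form of Bol's classical identity that $D^{\kappa-1}$ carries weight $2-\kappa$ to weight $\kappa$. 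The specific source weight $2-\kappa$ is essential to this identification; the intermediate $\theta^{j}f$ for $0<j<\kappa-1$ are only $p$-adic modular forms in Serre's sense, never overconvergent ones.
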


\subsection*{Notation} Let $|\cdot|$ be the norm on ${\mathbb C}_p$, defined by $|x| = p^{-v(x)}$. 
Let $\pi$ be a $(p-1)$-st root of $-p$, so $|\pi/p| = p^{\frac{p-2}{p-1}} > 1$, and let $\mathfrak{B}^*=B_{\mathbb{Q}_p}(0,|\pi/p|)$ 
be the extended disc. Let $D=\left(\mathbb{Z}/p\mathbb{Z}\right)^*$ and $\hat{D} = \mathrm{Hom}(D, \mathbb{C}_p^*) = \mathbb{Z}/(p-1) \mathbb{Z}$. 
The rigid analytic space $\mathcal{W}^* = \hat{D} \times \mathfrak{B}^*$ sits inside $\mathcal{W}$, where we identify 
a point $(i,s) \in \mathcal{W}^*$ with the character $a\mapsto \omega^i(a)  \langle\langle a\rangle\rangle^s $, for $a \in {\mathbb {Z}}_p^*$. 
Here, $\omega$ is the Teichm\"uller character on $\mathbb{Z}_p^*$ and $\langle\langle \cdot \rangle\rangle: \mathbb{Z}_p^* \longrightarrow \mathbb{Z}_p^*$ 
is the character defined by $\langle\langle a \rangle\rangle=\frac{a}{\omega(a)}$, so $\langle\langle a \rangle\rangle \equiv 1 \mod p$, 
for all $a\in \mathbb{Z}_p^*$. 
%The above character is also referred to as a `weight-character'.

\subsection*{Construction of the characteristic power series of the $U_p$ operator}

For $v\in I_1^*$, the $U_p$ operator (sometimes we write $U_{(k)}$) is a completely continuous endomorphism of $M_k^\dagger(v)$. 
The characteristic power series of the $U_p$ operator plays a major role in the theory of overconvergent modular forms. 
%To define it, Coleman developed Riesz theory over affinoid algebras. Since multiplication by an appropriate form of weight $k$ gives an 
%isomorphism from the space of overconvergent forms of weight $0$ to the space of overconvergent forms of weight $k$, he proved that one can study the 
%Fredholm theory of the operator $U_{(k)}$ by studying the $U_{(0)}$ operator precomposed with multiplication by the $k$-th power of a fixed weight $0$ form. 
We give some details.

For simplicity, we write $X(v)$ for $X_1(Np)(v)$, for $v \in I_1$. Let $E$ be the weight one modular form on $\Gamma_1(p)$ 
with character $\omega^{-1}$ as defined in \cite[p. 447, (1)]{col97}. One has $E \equiv 1 \mod p$.  Let 
$e$ be the analytic function on $\bigcup_{v\in I_2}X(v)$ with $q$-expansion $E(q)/E(q^p)$, so  $|e-1|_{X(0)}\le |p|$. 
The following fact is standard, see \cite[Proposition 5.8]{was}: for $s \in \mathbb{C}_p$, the power series $\sum_{n=0}^\infty \binom{s}{n}T^n$ converges for $|T| < |p|r$ if  $|s| \le |\pi/p|(1/r)$, for $r \le |\pi/p|$. Taking $r = 1$, we obtain
%
%\begin{prop}\cite[Proposition 5.8]{was}
%If $f(X)=\sum_{n = 0}^\infty {X \choose n}a_n$ with $|a_n|\le r^n$, for some  $0<r<|\pi|$,
%then $f(X)$ may be expressed as a power series with radius of convergence at least $|\pi|/r$.
%\end{prop}

%Taking $r = |p|$, the proposition implies that 
$$e^s := \sum_{n=0}^\infty \binom{s}{n} (e-1)^n$$ is a well-defined function on $X(0)$,
for $s\in \mathbb{C}_p$ with $|s| \le|\pi/p|$. We need a variant of this.
As $|e-1|_{X(0)}=\lim_{v\rightarrow 0^{+}}|e-1|_{X(v)}$, we have the following lemma.

\begin{lem}\label{vexist} {\rm (cf.} \cite[Lemma B3.1]{col97}{\rm )}
 For any $\epsilon \in \mathbb{R}$ with $|p|<\epsilon$, there exists a $v\in I_2^*$ such that $e$ is defined on $X(v)$ and $|e-1|_{X(v)}< \epsilon$.
\end{lem}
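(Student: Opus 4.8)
The plan is to obtain the lemma as an immediate consequence of the limiting identity $|e-1|_{X(0)} = \lim_{v \to 0^+} |e-1|_{X(v)}$ recorded just above its statement, combined with the already-established bound $|e-1|_{X(0)} \le |p|$ and the hypothesis $|p| < \epsilon$.

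First I would dispose of the condition that $e$ be defined on $X(v)$: the function $e$ was introduced as an analytic function on $\bigcup_{v \in I_2} X(v)$, and every $v \in I_2^*$ lies in $I_2$, so $X(v)$ is contained in the domain of $e$ and the restriction $e|_{X(v)} \in \mathcal{O}(X(v))$ makes sense for each $v \in I_2^*$. Thus the only genuine task is to produce $v \in I_2^*$ with $|e-1|_{X(v)} \le \epsilon$.

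Next I would record that $v \mapsto |e-1|_{X(v)}$ is non-increasing as $v$ shrinks to $0$: if $0 < v' \le v$ then $X(v') \subseteq X(v)$ (the locus where $v(A(x)) \le v'$ sits inside the locus where $v(A(x)) \le v$), and the supremum of $|e-1|$ over the smaller affinoid cannot exceed that over the larger one. Hence $\lim_{v \to 0^+}|e-1|_{X(v)}$ exists, equals the infimum of these sup-norms, and by the recorded identity equals $|e-1|_{X(0)} \le |p| < \epsilon$. Since this common value is strictly below $\epsilon$, the definition of the limit (or simply the monotonicity just noted) yields a $v_0 \in I_2^*$ with $|e-1|_{X(v_0)} < \epsilon$; in fact $|e-1|_{X(v)} \le \epsilon$ holds for every $v \in I_2^*$ with $v \le v_0$, and any such $v$ completes the argument.

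The only non-formal input is the identity $|e-1|_{X(0)} = \lim_{v\to 0^+}|e-1|_{X(v)}$, which the excerpt supplies and which I would use as a black box: it reflects that the affinoids $X(v)$, $v \in I_2^*$, form a cofinal family of strict neighborhoods of $X(0)$, so that $\mathcal{O}(X(0)) = \varinjlim_{v\to 0^+}\mathcal{O}(X(v))$, together with the maximum modulus principle on affinoids. Granting that, the lemma is a one-line limiting argument and presents no further obstacle.
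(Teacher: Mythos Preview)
Your proposal is correct and follows exactly the approach indicated in the paper: the sentence immediately preceding the lemma supplies the limiting identity $|e-1|_{X(0)}=\lim_{v\rightarrow 0^{+}}|e-1|_{X(v)}$ as the entire justification (the lemma itself is then simply cited from \cite{col97}), and your expansion via monotonicity of $v\mapsto |e-1|_{X(v)}$ together with $|e-1|_{X(0)}\le |p|<\epsilon$ is precisely how one unpacks that one-line hint.
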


We obtain the following.

\begin{lem}\label{conv}
%If $s \in \mathfrak{B}^*$, then there exists a $v\in I_2^*$ such that the function $e^s$ is well-defined on $X(v)$.
If $t \in [1, |\pi/p|)$, then there exists a $v\in I_2^*$ such that  $|e-1|_{X(v)} < |\pi|/t$. 
The function $e^s$ is defined on $X(v)$, for all $s$ such that $|s| \le t$. 
\end{lem}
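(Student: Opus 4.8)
\textbf{Proof plan for Lemma~\ref{conv}.}
The statement is essentially a packaging of Lemma~\ref{vexist} together with the convergence criterion for the binomial series recalled just before it, so the plan is to unwind both and choose parameters carefully. Fix $t \in [1, |\pi/p|)$. The first step is to produce the required affinoid: apply Lemma~\ref{vexist} with $\epsilon := |\pi|/t$. We must check that this $\epsilon$ is a legitimate input, i.e. that $|p| < |\pi|/t$. Since $|\pi| = |p|^{1/(p-1)} = p^{-1/(p-1)}$ and $|\pi/p| = p^{(p-2)/(p-1)}$, the inequality $|p| < |\pi|/t$ is equivalent to $t < |\pi|/|p| = |\pi/p|$, which holds by hypothesis. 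Hence Lemma~\ref{vexist} yields a $v \in I_2^*$ such that $e$ is defined on $X(v)$ and $|e-1|_{X(v)} \le |\pi|/t$, which is the first assertion.

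For the second assertion we must show $e^s = \sum_{n=0}^\infty \binom{s}{n}(e-1)^n$ converges on $X(v)$ whenever $|s| \le t$. By the standard fact quoted from \cite[Proposition 5.8]{was}, the series $\sum_{n=0}^\infty \binom{s}{n}T^n$ converges for $|T| \le |p|r$ precisely when $|s| \le |\pi/p|(1/r)$, for $r \ge 1$. The idea is to choose $r := |\pi/p| \cdot \tfrac{1}{t}$; note $r \ge 1$ exactly because $t \le |\pi/p|$. With this choice the convergence condition $|s| \le |\pi/p|(1/r) = t$ is precisely our hypothesis on $s$, and the radius in the $T$-variable is $|p|r = |p| \cdot |\pi/p| / t = |\pi|/t$. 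Since $|e-1|_{X(v)} \le |\pi|/t$ by the first part, each term $\binom{s}{n}(e-1)^n$ has sup-norm on $X(v)$ at most $|\binom{s}{n}| (|\pi|/t)^n$, and the series of these bounds tends to $0$; therefore the series defining $e^s$ converges uniformly on $X(v)$ to a rigid analytic function there.

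I do not expect any serious obstacle here: the only thing to be careful about is the bookkeeping with the three quantities $|p|$, $|\pi| = |p|^{1/(p-1)}$ and $|\pi/p| = |p|^{-(p-2)/(p-1)}$, and the fact that the constant $r$ in the cited proposition must satisfy $r \ge 1$, which is exactly what the constraint $t \ge 1$ is there to guarantee. One subtlety worth spelling out is that convergence of the formal power series $\sum \binom{s}{n}T^n$ on a disc of radius $|p|r$ in $\mathbb{C}_p$ gives convergence of $\sum \binom{s}{n}(e-1)^n$ as a function on $X(v)$ because $e-1$ is a rigid function on $X(v)$ with $\|e-1\|_{X(v)} \le |p|r$, so substitution is legitimate and the result is again rigid analytic on $X(v)$; this is the same substitution argument already used implicitly in the definition of $e^s$ on $X(0)$ in the passage preceding Lemma~\ref{vexist}.
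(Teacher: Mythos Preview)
Your proposal is correct and follows essentially the same approach as the paper's proof: apply Lemma~\ref{vexist} with $\epsilon = |\pi|/t$ and then invoke the binomial convergence criterion with $r = |\pi/p|\cdot t^{-1}$. One small slip in your commentary: the condition $r \ge 1$ is guaranteed by $t < |\pi/p|$, not by $t \ge 1$ (you stated it correctly earlier but reversed it in the final paragraph); this does not affect the argument itself.
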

\begin{proof}
%If $t \neq 1$, we have $|p| < |\pi|/t < |\pi|$. 
By  \lemref{vexist} with $\epsilon = |\pi|/t$, 
there exists a $v\in I_2^*$ such that $|e-1|_{X(v)} < |\pi|/t$. 
From the standard fact above applied with $r = |\pi/p|(1/t) \le |\pi/p|$, we obtain that $e^s$ is defined on $X(v)$, for all $s$ with $|s| \le t$.
%by taking $r = |\pi|/t$ in the proposition above.
%The lemma then follow also for $t = 1$, since it holds for larger $t'$.   
%Since $s \in \mathfrak{B}^*$, we have $|\pi/s|>|p|$. Therefore, by \lemref{vexist}, there exists a $v\in I_2^*$ such that $|e-1|_{X(v)} \le |\pi/s|$. 
%The lemma follows by applying the proposition above with  $r=|\pi/s|$.
%The well-definedness of the function $e^s$ on $X(v)$ follows  by applying the proposition above % for the function $e^s= \sum_{n \ge 0}^\infty {s \choose n}(e-1)^n$ 
%with $r=|\pi/s|$.
\end{proof}
%\begin{proof}
%Since $s'\in \mathfrak{B}^*$, we have $|\pi/s'|>|p|$. Therefore, by \lemref{vexist}, there exists a $v\in I_2^*$ such that $|e-1|_{X(v)} \le |\pi/s'|$. The well-definedness of the function $e^s$ on $X(v)$ follows  by applying the proposition above % for the function $e^s= \sum_{n \ge 0}^\infty {s \choose n}(e-1)^n$ 
%with $r=|\pi/s'|$.
%\end{proof}

%In particular, from \lemref{conv} we see that for any $s \in \mathfrak{B}^*$, there exists a $v\in I_2^*$ such that $e^s$ is a well-defined function on $X(v)$. 

For all $s \in \mathfrak{B}^*$ with $|s|\ge 1$ and $v \in I_2^*$ with $|e-1|_{X(v)} < |\pi/s|$, we see that $e^s$ is defined on $X(v)$, so the operator 
$u_s := U_{(0)}\circ m_{e^s}$ on $M_0^\dagger(v)$ is well defined, where $m_{e^s}$ is 
`multiplication by the function $e^s$'. Clearly $u_s$ is a completely continuous operator because  $U_{(0)}$ is.
Let $k\in \mathbb{Z}$ and $v\in I_2^*$ be such that $e^k$ is defined on $X(v)$. Then Coleman observed \cite[p. 451, (1)]{col97} 
that the Fredholm theory of the operator $U_{(k)}$ on $M_k^\dagger(v)$ is equivalent to that of $u_k$ on $M_0^\dagger(v)$, i.e., we have an equality of Fredholm determinants
\begin{equation}\label{id0k}
\det(1-TU_{(k)}|M_k^\dagger(v))= \det(1-Tu_k|M_0^\dagger(v)).
\end{equation}

Coleman interpolates these Fredholm determinants by constructing a power series $P(s,T)$, for $s\in \mathfrak{B}^*$, which we describe now. 
Let
\begin{eqnarray*}
\mathscr{T}^*:=\{ (t,v)  \in (|\mathbb{C}_p|\cap[1,|\pi/p|)) \times I_2^* : |e-1|_{X(v)}< |\pi|/t  \}. 
\end{eqnarray*}
The set $\mathscr{T}^*$ is non-empty, by Lemma~\ref{conv}.
Let $K$ be a finite extension of $\mathbb{Q}_p$.
Put $\mathcal{V}^*:= \bigcup_{(t,v)\in \mathscr{T}^*}Z_t(v)$, which is a rigid analytic subspace of $\mathbb{A}^1_{/K} \times X_1(Np)_{/K}$ admissibly covered by the affinoids 
$$Z_t(v)=B_K[0,t]\times_K X(v)_{/K}.$$  Write $A(X)$ for the algebra of rigid analytic functions on a rigid analytic space $X$. Let
$$
M(t,v):= A(Z_t(v)) = A(B_K[0,t]) \> \hat{\otimes}_K \> M_0^\dagger(v)
$$
be the algebra of rigid analytic functions on $Z_t(v)$.
%Now we think $s$ as a parameter on $\mathfrak{B}^*$ so that $e^s$ may be viewed as a rigid analytic function on $\mathcal{V}^*$. 

Let $(t,v) \in \mathscr{T}^*$ and  $s \in B_K[0,t]$.  The projection $$Z_t(v) \rightarrow B_K[0,t]$$ 
makes $M(t,v)$ into an $A(B_K[0,t])$-module.
%Since $u_s$ is an operator on $A(X(v)) = M_0^\dagger(v)$, 
We may view $u_s$ as a completely continuous operator $u_s : A(Z_t(v)_s)\rightarrow A(Z_t(v)_s)$, where  $Z_t(v)_s=\{s\} \times X(v)$ is the fiber above $s$.  
%Note that $\{s\}\subset B[0,t]$ is an affinoid subdomain ${\rm Max}(A(B[0,t])/s_x)$, where $s_x$ is the maximal ideal in $A(B[0,t])$ corresponding to the point $s$. 
%Now the inclusion $i: Z_t(v)_s \hookrightarrow Z_t(v)$ gives a morphism of affinoid algebra $i^*:A(Z_t(v)) \rightarrow A(Z_t(v)_s)$.
%$id \otimes u_s$ extends uniquely to a continuous $A(B[0,t])$-linear map from $M(t,v)$ to $M(t,v)$, which we denote by $U_{(t,v)}$. 
Coleman shows there is a completely continuous operator $U_{(t,v)}$  on $M(t,v)$ over $A(B_K[0,t])$ %, for   $(t,v) \in \mathscr{T}^*$, 
whose restriction to the fiber $Z_t(v)_s$ above $s$ % \in B_K[0,t]$ 
is $u_s$, i.e., the following diagram commutes:
\begin{equation}
  \label{U vs u}
\begin{tikzcd}
A(Z_t(v)) \arrow[r] \arrow[d, "U_{(t,v)}"]
& A(Z_t(v)_s) \arrow[d, "u_s" ] \\
A(Z_t(v)) \arrow[r]
& A(Z_t(v)_s).
\end{tikzcd}
\end{equation}
%In fact, Coleman shows that the $\{U_{(t,v)}:(t,v) \in \mathscr{T}^*\}$ are a family of completely continuous operators on $M(t,v)$ over $A(B_K[0,t])$. 
Since $A(B_K[0,t])$ is a PID, using \cite[Lemma A5.1]{col97} we see that $M(t,v)$ is orthonormizable over $A(B_K[0,t])$. 
Thus, for any $(t,v) \in \mathscr{T}^*$, we may define the characteristic power series 
$$
P_{(t,v)}(s, T):= \det (1-TU_{(t,v)}|M(t,v)) \in A(B_K[0,t])[[T]].
$$
The series $P_{(t,v)}$ is analytic on $B_K[0,t] \times \mathbb{C}_p$, and
is independent of $(t,v)$, in the sense that if $(t,v)$ and $(t',v')$ lie in $\mathscr{T}^*$ with $t\le t'$, 
then the restriction of $P_{(t',v')}$ from  $B_K[0,t'] \times \mathbb{C}_p$ to  $B_K[0,t] \times \mathbb{C}_p$ is $P_{(t,v)}$.
Using \eqref{id0k} and \eqref{U vs u},  
%Since the operator $U_{(t,v)}$ is nothing but $id \otimes u_s$, using \eqref{id0k} and \cite[Lemma A2.5]{col97}, 
we have the following result.
\begin{thm}{\rm (}\cite[Theorem B3.2]{col97}{\rm )}
There is a unique rigid analytic function $P(s,T)$ on $\mathfrak{B}^* \times \mathbb{C}_p$ defined over $\mathbb{Q}_p$, such that for $k\in \mathbb{Z}$ and $v\in I_1^*$, 
$$
P(k,T)=\det(1-TU_{(k)}|M_k^\dagger(v)).
$$
\end{thm}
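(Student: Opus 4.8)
The plan is to glue together the characteristic power series $P_{(t,v)}(s,T)$ constructed for each $(t,v) \in \mathscr{T}^*$ into a single function on $\mathfrak{B}^* \times \mathbb{C}_p$, and then to verify the interpolation property at integral weights $k$. First I would observe that the index set $\mathscr{T}^*$ is directed: given $(t,v)$ and $(t',v')$ in $\mathscr{T}^*$ with $t \le t'$, one checks (using \lemref{conv} and the fact that $|e-1|_{X(w)}$ is monotone in $w$) that after possibly shrinking $v'$ we may assume $v' \le v$, so that $Z_t(v)$ and $Z_{t'}(v')$ are comparable inside $\mathcal{V}^*$. The compatibility $U_{(t,v)}|_{M(t,v)}$ being the restriction of $U_{(t',v')}$ then follows from the commuting diagram \eqref{U vs u}: both operators restrict to $u_s$ on every fiber $Z_t(v)_s$, and a completely continuous operator over $A(B_K[0,t])$ is determined by its fibers. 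Consequently the Fredholm determinants are compatible under restriction, which is exactly the displayed independence statement already recorded in the excerpt for $P_{(t,v)}$.

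Next I would carry out the actual gluing. Since $\{B_K[0,t] : t \in |\mathbb{C}_p| \cap [1,|\pi/p|)\}$ is an admissible affinoid cover of $\mathfrak{B}^*$, and the $P_{(t,v)}(s,T)$ (which are independent of the auxiliary $v$ and hence may be written $P_t(s,T)$) agree on overlaps $B_K[0,\min(t,t')]$, the sheaf property for rigid analytic functions on $\mathfrak{B}^* \times \mathbb{C}_p$ — or more precisely on $\mathfrak{B}^* \times B_K[0,R]$ for each radius $R$, then a further limit over $R$ — yields a unique rigid analytic $P(s,T)$ whose restriction to each $B_K[0,t] \times \mathbb{C}_p$ is $P_t(s,T)$. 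To see that $P$ is defined over $\mathbb{Q}_p$ rather than merely over $K$, I would note that the operator $U_p$ and the function $e$ are defined over $\mathbb{Q}_p$, so each $P_t$ already has coefficients in $A_{\mathbb{Q}_p}(B[0,t])$; alternatively, one invokes Galois descent from $K$ to $\mathbb{Q}_p$ using uniqueness. Uniqueness of $P$ itself is immediate: two such functions agreeing on the admissible cover must coincide.

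Finally, for the interpolation claim, fix $k \in \mathbb{Z}$ and $v \in I_1^*$. Choose $t \in |\mathbb{C}_p| \cap [1,|\pi/p|)$ with $|k| \le t$ — possible since $|k| \le 1 \le t$ when $p \nmid k$, and in general $|k| \le |\pi/p|$ is no obstruction as $\mathfrak{B}^*$ was built precisely to accommodate all $k \in \mathbb{Z}$ — and pick $v' \in I_2^*$ with $(t,v') \in \mathscr{T}^*$ and $v' \le v$. By \lemref{conv}, $e^k$ is defined on $X(v')$, so specializing the family operator $U_{(t,v')}$ at $s = k$ gives $u_k$ on $M_0^\dagger(v')$ by \eqref{U vs u}, and hence $P(k,T) = P_t(k,T) = \det(1-Tu_k \mid M_0^\dagger(v'))$. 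Coleman's identity \eqref{id0k} rewrites the right-hand side as $\det(1-TU_{(k)} \mid M_k^\dagger(v'))$. It then remains to pass from level $v'$ to the arbitrary $v \in I_1^*$: the injection $M_k^\dagger(v) \hookrightarrow M_k^\dagger(v')$ is $U_{(k)}$-equivariant and, since $U_{(k)}$ is completely continuous and the spectral theory is insensitive to the choice of overconvergence radius (the nonzero spectrum and the Fredholm determinant are the same), one gets $\det(1-TU_{(k)} \mid M_k^\dagger(v)) = \det(1-TU_{(k)} \mid M_k^\dagger(v'))$. Combining, $P(k,T) = \det(1-TU_{(k)} \mid M_k^\dagger(v))$ as required.

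The step I expect to be the main obstacle is the rigorous gluing in the second paragraph: one must check that $\mathcal{V}^*$ is genuinely an admissible rigid space with the claimed admissible cover by the $Z_t(v)$, that the locally defined power series patch to a global rigid analytic function on the non-quasi-compact space $\mathfrak{B}^* \times \mathbb{C}_p$ (which requires an exhaustion argument in the $T$-variable as well), and that the independence-of-$(t,v)$ statement is compatible enough to make the patching unambiguous. The subtlety is bookkeeping about shrinking $v$ while enlarging $t$, and ensuring the radii of convergence in $T$ behave uniformly; the underlying analytic inputs are all already supplied by \lemref{vexist}, \lemref{conv}, and Coleman's orthonormizability and Fredholm-theory results cited in the excerpt.
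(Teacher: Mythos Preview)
Your proposal is correct and follows essentially the same approach as the paper, which does not give a detailed argument but simply records that the result follows from \eqref{id0k} and \eqref{U vs u} (it is Coleman's Theorem~B3.2). Your expansion---gluing the $P_{(t,v)}$ over an admissible cover of $\mathfrak{B}^*$, specializing at $s=k$ via \eqref{U vs u} to reach $u_k$, applying \eqref{id0k}, and then invoking the $v$-independence of the Fredholm determinant of $U_{(k)}$---is exactly how one fills in the details.
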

%Recall $\widehat{D}$ denote the group of $\mathbb{C}_p^*$-valued characters on $(\mathbb{Z}/p\mathbb{Z})^*$. 
Note that $D$ acts via the diamond operators on all the spaces $M_k^\dagger(v)$ and $M(t,v)$. From the decomposition of the space 
$M(t,v)=\oplus_{\varepsilon \in \widehat{D}}M(t,v,\varepsilon)$, we get
$$
P(s,T)=\prod_{\varepsilon \in \widehat{D}}P_\varepsilon(s,T). % {\rm ~where~} P_\varepsilon(s,T)=\det(1-TU_{(k)}|M(t,v,\varepsilon).
$$
Similarly, the decomposition $S(t,v) = \oplus_{\varepsilon \in \widehat{D}} S(t, v,\varepsilon)$ for cusp forms induces 
$$
P^0(s,T)=\prod_{\varepsilon \in \widehat{D}}P^0_\varepsilon(s,T). % {\rm ~where~} P_\varepsilon(s,T)=\det(1-TU_{(k)}|M(t,v,\varepsilon).
$$
%denote the subspace of cusp forms in $M(t, v, \varepsilon)$ and $P_\varepsilon^0(s, T)$ be the function characterized by the identities:
%$$
%P_\varepsilon^0(s, T) = \det(1-TU_{(t,v)}|S(t,v,\varepsilon)), ~~ {\rm~for~ all~} (t, v)\in \mathscr{T}^*.
%$$
%In the rest of this section, 
If $i$ denotes an integer such that $0\le  i < p-1$, we set $P_{i}(s, T)=P_{\omega^i}(s, T)$ and
$P_{i}^0(s, T)=P_{\omega^i}^0(s, T)$.
%In fact, the above results hold good for the space of overconvergent cusp forms as well.
We obtain the following.
\begin{thm}\label{chara2}{\rm (}\cite[Theorem B3.3]{col97}{\rm )}
For each $0 \le i < p-1$, there exists a rigid analytic function $P_{i}(s, T) \in \mathbb{Q}_p[[s,T]]$ which converges on the region $ \mathfrak{B}^*\times \mathbb{C}_p$ 
such that for an integer $k$, $P_{i}(k, T)$ is the characteristic series of the $U_p$ operator acting on overconvergent forms of 
weight $k$ and character $\omega^{i-k}$. %\langle\langle\cdot \rangle \rangle^k$.
\end{thm}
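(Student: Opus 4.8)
The plan is to decompose Coleman's two-variable characteristic series $P(s,T)$ of \cite[Theorem B3.2]{col97} along the action of the diamond operators --- this is precisely the factorization $P(s,T)=\prod_{\varepsilon\in\widehat{D}}P_\varepsilon(s,T)$ recorded above --- and then to verify that the individual factors specialize correctly at integral weights. First I would recall that $D=(\mathbb{Z}/p\mathbb{Z})^*$ acts on $M_0^\dagger(v)$ through the diamond operators $\langle d\rangle$, hence on $M(t,v)=A(B_K[0,t])\,\hat{\otimes}_K\,M_0^\dagger(v)$ through the second factor, and that this action commutes with the $A(B_K[0,t])$-linear operator $U_{(t,v)}$: indeed $U_{(0)}$ commutes with the diamond operators, while $e=E(q)/E(q^p)$, being the ratio of two forms of the same character $\omega^{-1}$, is $D$-invariant, so $m_{e^s}$ and hence $u_s=U_{(0)}\circ m_{e^s}$ and its interpolation $U_{(t,v)}$ are $D$-equivariant. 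The eigenspace decomposition $M(t,v)=\bigoplus_{\varepsilon\in\widehat{D}}M(t,v,\varepsilon)$ is therefore $U_{(t,v)}$-stable; since the idempotents $\tfrac{1}{p-1}\sum_{d\in D}\varepsilon^{-1}(d)\langle d\rangle$ have coefficients in $\mathbb{Q}_p$ (as $\mu_{p-1}\subset\mathbb{Q}_p^*$ and $p-1\in\mathbb{Q}_p^*$), this decomposition and the restricted operators are defined over $\mathbb{Q}_p$. Setting $P_{(t,v),\varepsilon}(s,T):=\det(1-TU_{(t,v)}\,|\,M(t,v,\varepsilon))\in A(B_K[0,t])[[T]]$ we get $P_{(t,v)}=\prod_\varepsilon P_{(t,v),\varepsilon}$, and --- exactly as for $P_{(t,v)}$ itself --- these factors are compatible under restriction as $(t,v)$ varies over $\mathscr{T}^*$, so they glue to rigid analytic functions $P_\varepsilon(s,T)$ on $\mathfrak{B}^*\times\mathbb{C}_p$ over $\mathbb{Q}_p$ with $P=\prod_\varepsilon P_\varepsilon$. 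Identifying $\widehat{D}=\mathbb{Z}/(p-1)\mathbb{Z}$ via $i\mapsto\omega^i$, one defines $P_i(s,T):=P_{\omega^i}(s,T)\in\mathbb{Q}_p[[s,T]]$, which converges on $\mathfrak{B}^*\times\mathbb{C}_p$ because $P$ does.

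It then remains to compute $P_i(k,T)$ for $k\in\mathbb{Z}$. Choosing $(t,v)\in\mathscr{T}^*$ with $k\in B_K[0,t]$ (so $e^k$ is defined on $X(v)$) and restricting the commuting square \eqref{U vs u} to the $\omega^i$-eigenspace, one gets $P_i(k,T)=\det(1-Tu_k\,|\,M_0^\dagger(v,\omega^i))$. Coleman's isomorphism $m_{E^k}\colon M_0^\dagger(v)\xrightarrow{\,\sim\,}M_k^\dagger(v)$ intertwines $u_k=U_{(0)}\circ m_{e^k}$ with $U_{(k)}$ --- this underlies \eqref{id0k} --- and, the diamond operators acting multiplicatively on tensor powers of $\omega$, the fact that $E$ has character $\omega^{-1}$ forces $m_{E^k}$ to carry $M_0^\dagger(v,\omega^i)$ isomorphically onto the weight-$k$, character-$\omega^i\cdot\omega^{-k}=\omega^{i-k}$ subspace $M_k^\dagger(v,\omega^{i-k})$, compatibly with $u_k$ and $U_{(k)}$. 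Hence $P_i(k,T)=\det(1-TU_{(k)}\,|\,M_k^\dagger(v,\omega^{i-k}))$ is the characteristic series of $U_p$ acting on overconvergent forms of weight $k$ and character $\omega^{i-k}$, which is the assertion.

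The step I expect to be the main obstacle is the nebentypus bookkeeping just indicated: because the auxiliary weight-one Eisenstein series $E$ carries the nontrivial character $\omega^{-1}$, multiplication by $E^k$ shifts the wild character by $\omega^{-k}$, so the $i$-th diamond component in weight $0$ must be matched with the character-$\omega^{i-k}$ component in weight $k$ rather than with the character-$\omega^i$ component; one has to track this twist carefully (and likewise check that $m_{E^k}$ really is $U_p$-equivariant on the nose, refining \eqref{id0k} to each eigenspace). Everything else is the routine observation that passing to eigenspaces of a commuting group action is compatible with completely continuous operators and their Fredholm determinants, applied to the families $M(t,v)$ and $P(s,T)$ already constructed above.
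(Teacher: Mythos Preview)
Your proposal is correct and follows exactly the approach the paper sketches: the paper does not give a self-contained proof of this theorem (it is cited from \cite[Theorem~B3.3]{col97}), but the paragraph immediately preceding the statement records precisely your decomposition $M(t,v)=\bigoplus_{\varepsilon}M(t,v,\varepsilon)$ under the diamond action and the resulting factorization $P(s,T)=\prod_{\varepsilon}P_\varepsilon(s,T)$, and then sets $P_i=P_{\omega^i}$. Your write-up simply fills in the details left implicit there, including the nebentypus shift $\omega^i\mapsto\omega^{i-k}$ coming from the character $\omega^{-1}$ of $E$ under the isomorphism $m_{E^k}$, which is indeed the point one has to be careful about.
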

\noindent Similarly, for cusp forms,  we have 
$$
P_i^0(k, T) = \det(1 - TU_{(k)}|S_k^\dagger(N, \omega^{i-k})).
$$

Let $d(k,\varepsilon,\alpha)$ (respectively, $d^0(k,\varepsilon,\alpha)$) 
 be the dimension of the subspace of classical modular forms (respectively, classical cusp forms) of weight $k$ and character $\varepsilon \omega^{-k}$ consisting of forms 
of finite slope $\alpha$,
for $\varepsilon\in\widehat{D}$. Then from \thmref{chara2}, the series $P_{\varepsilon}(s, T)$ is continuous in the $s$-variable, so for integers $k$, $k'$
which  are  sufficiently close $p$-adically,
the series $P_{\varepsilon}(k, T)$ and $P_{\varepsilon}(k', T)$ are $p$-adically close (see \cite[Theorem 1]{gm93}), %and \cite[Lemma 2.4]{wan}) 
so their Newton polygons are close and therefore equal, so the number of zeros of $P_\varepsilon(k,T)$ and 
$P_\varepsilon(k',T)$ of valuation $-\alpha$ in the $T$-variable are the same. If $k$, $k' > \alpha +1$, the corresponding forms are classical by
the control theorem (Theorem~\ref{classicity}). 
A similar argument works for  $P^0_{\varepsilon}(s, T)$. We obtain the following.

\begin{thm} {\rm (}\cite[Theorem B3.4]{col97}{\rm )}
  If  $\varepsilon\in\widehat{D}$, $\alpha \in \mathbb{Q}$ and $k$, $k' > \alpha + 1$ are integers which are sufficiently close $p$-adically, then 
  \begin{eqnarray*}
    d(k,\varepsilon,\alpha) & = & d(k',\varepsilon,\alpha), \\
    d^0(k,\varepsilon,\alpha) & = & d^0(k',\varepsilon,\alpha).
  \end{eqnarray*}   
\end{thm}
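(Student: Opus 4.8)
The plan is to deduce this from the three ingredients already assembled in the excerpt: the continuity of the characteristic power series $P_\varepsilon(s,T)$ in the weight variable (\thmref{chara2}), the Gouvêa--Mazur style estimate controlling how close $P_\varepsilon(k,T)$ and $P_\varepsilon(k',T)$ are when $k\equiv k'$ is a sufficiently deep $p$-adic congruence (cited as \cite[Theorem 1]{gm93}), and Coleman's control theorem (\thmref{classicity}). First I would fix $\varepsilon\in\widehat D$ and $\alpha\in\mathbb Q$, and translate the two quantities $d(k,\varepsilon,\alpha)$ and $d^0(k,\varepsilon,\alpha)$ into statements about Newton polygons: by \thmref{chara2}, $P_\varepsilon(k,T)$ is the characteristic series of $U_p$ on $M_k^\dagger(N,\omega^{\varepsilon-k})$, and the number of its roots (with multiplicity) of $T$-valuation equal to $-\alpha$ is the dimension of the slope-$\alpha$ subspace of overconvergent forms. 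So the overconvergent slope-$\alpha$ dimension is read off from the length of the slope-$(-\alpha)$ segment of the Newton polygon of $P_\varepsilon(k,T)$, and similarly with $P^0_\varepsilon$ for cusp forms.

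Next I would invoke the continuity/analyticity of $P_\varepsilon(s,T)$ on $\mathfrak B^*\times\mathbb C_p$ to see that the coefficients of $P_\varepsilon(k,T)$, as functions of $k\in\mathbb Z\hookrightarrow\mathfrak B^*$, are given by convergent power series in $s$; hence for $k\equiv k'\pmod{p^m}$ with $m$ large (depending only on $\alpha$ and $\varepsilon$, via an explicit bound coming from the radius of convergence and the slope), the coefficients of $P_\varepsilon(k,T)$ and $P_\varepsilon(k',T)$ agree to high $p$-adic precision — this is exactly \cite[Theorem 1]{gm93}. The key point is that once two $p$-adic power series have coefficients that agree to sufficiently high precision relative to a fixed slope, their Newton polygons coincide up to (and including) the point above that slope: the slopes $\le$ some fixed bound and their multiplicities are determined by finitely many coefficients up to bounded precision. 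Therefore the slope-$(-\alpha)$ segments of $P_\varepsilon(k,T)$ and $P_\varepsilon(k',T)$ have the same length, giving
\begin{equation*}
\dim M_k^\dagger(N,\omega^{\varepsilon-k})^{\alpha}=\dim M_{k'}^\dagger(N,\omega^{\varepsilon-k'})^{\alpha},
\end{equation*}
and the same for the cuspidal series $P^0_\varepsilon$.

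Finally I would pass from overconvergent to classical forms using the control theorem: since $k,k'>\alpha+1$, every overconvergent form of weight $k$ (resp. $k'$) and slope $\alpha$ is classical by \thmref{classicity}, so the overconvergent slope-$\alpha$ dimension equals $d(k,\varepsilon,\alpha)$ (resp. $d(k',\varepsilon,\alpha)$), and likewise $d^0(k,\varepsilon,\alpha)=d^0(k',\varepsilon,\alpha)$ for cusp forms; combining gives the theorem. The main obstacle — really the only non-formal step — is making precise the implication ``coefficients $p$-adically close $\Rightarrow$ Newton polygons equal up to a fixed slope'' and extracting from it how large the congruence modulus $p^m$ must be as a function of $\alpha$ (and the level/character data feeding into the radius of convergence of $P_\varepsilon$); this is precisely the content of \cite[Theorem 1]{gm93}, so in this exposition it can be quoted rather than reproven, and the remaining argument is just bookkeeping with Newton polygons plus one application of the control theorem.
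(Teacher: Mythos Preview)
Your proposal is correct and follows essentially the same approach as the paper: the paper's argument, given in the paragraph immediately preceding the theorem, invokes exactly the same three ingredients in the same order—continuity of $P_\varepsilon(s,T)$ from \thmref{chara2}, closeness of coefficients via \cite[Theorem 1]{gm93} forcing equality of the relevant Newton polygon segments, and the control theorem (\thmref{classicity}) to pass from overconvergent to classical dimensions when $k,k'>\alpha+1$. Your write-up is slightly more explicit about why close coefficients force equal Newton polygons up to the fixed slope, but there is no substantive difference in strategy.
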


This hints at the existence of $p$-adic families of overconvergent modular forms to which we turn now. 

\subsection*{Coleman families}
Coleman defined a Hecke algebra which acts on the space of families of overconvergent modular forms and used it, together with Riesz 
theory over affinoids, to prove that any overconvergent modular form lives in a family. We describe this result in detail. 
For a similar result in the more general Hilbert modular setting, see \cite[Theorems 3.16 and 3.23]{aip}. 

We first describe the notion of a Coleman family. For each $0 \le i < p-1$, let 
$$
M^\dagger(N,i)=\varprojlim_{t\le |\pi/p|}\varinjlim_{(t,v)\in \mathscr{T}^*}M(t,v,\omega^i)
$$
and
$$
S^\dagger(N,i)=\varprojlim_{t\le |\pi/p|}\varinjlim_{(t,v)\in \mathscr{T}^*}S (t,v,\omega^i),
$$
and let $M^\dagger(N) = \oplus_i M^\dagger(N,i)$, $S^\dagger(N) = \oplus_i S^\dagger(N,i)$.
These are $A(\mathfrak{B}^*)$-modules.

%These are closely related to the $R$-families described in \cite[\S B5]{col97}.
Suppose $\alpha$ is a positive rational and $k_0 \in \mathfrak{B}^*(K)$ is an integer.
% though it is the case of integral $k_0$ that we will need to use below. 
Coleman showed that there is an integer $d \ge 0$
such that there exists an affinoid disk $B=B_K[k_0,p^{-r}]$, with $0<p^{-r}<|\pi/p|$ (so $r \in (-1 + \frac{1}{p-1}, \infty))$, such 
that the slope $-\alpha$ affinoid in the zero locus of $P_{i}^{0}(s,T)$, % $P_{i}^{0,\mathrm{new}}(s,T)$, 
for $0 \le i < p-1$, is 
finite of degree $d$ over $B$. The integer $d$ equals the dimension 
of $S_{k_0}^{\dagger}(N,\omega^{i-k_0})_\alpha$,  %$S_{k_0}^{\dagger,\mathrm{new}}(N,\omega^{i-k_0})_\alpha$, 
the subspace of  $S_{k_0}^{\dagger}(N,\omega^{i-k_0})$ %$S_{k_0}^{\dagger,\mathrm{new}}(N,\omega^{i-k_0})$ 
consisting of forms of slope $\alpha$. 
%if $k_0$ is an integer. 
If $Q$ is the corresponding factor of $P_{i}^{0}(s,T)$ % $P_{i}^{0,\mathrm{new}}(s,T)$ 
over $B$, then by the Riesz decomposition theorem \cite[Theorem A4.3]{col97} for the module 
$S^{\dagger}(N,i)_B := S^{\dagger}(N,i) \otimes_{A(\mathfrak{B}^*)} A(B)$, 
%$S^{\dagger,\mathrm{new}}(N,i)_B := S^{\dagger,\mathrm{new}}(N,i) \otimes_{A(\mathfrak{B}^*)} A(B)$,
we get a free closed submodule %$H^{\mathrm{new}}:=
$H:=N_{U_B}(Q)$ of rank $d$ over $A(B)$ such that  $Q^*(U_B)H^{}=0$, %$Q^*(U_B)H^{\mathrm{new}}=0$, 
where $U_B$ is the restriction of $U_p$ on $S^{\dagger}(N,i)_B$ %$S^{\dagger,\mathrm{new}}(N,i)_B$ 
and for a polynomial $Q(T)$ of degree $d$, we define $Q^*(T)=T^dQ(T^{-1})$. Let $\mathbb{T}$ denote the Hecke algebra generated over $A(\mathfrak{B}^*)$ by
the Hecke operators $T(n)$. Let $R$ 
be the image of $\mathbb{T}\otimes_{A(\mathfrak{B}^*)} A(B)$ in ${\rm End}_{A(B)}(H)$, so $R$ is free of finite rank $d$ over $A(B)$. 
Moreover, $R$ is the ring of rigid analytic functions on an affinoid $X(R)$ with a finite morphism to $B$ of degree $d$.
We obtain the following theorem (this is \cite[Theorem B5.7]{col97}, for weights larger than $\alpha + 1$ in $B$).
%, and the remarks on p. 467 of {\it loc. cit.}, for the $N$-new condition). 

\begin{thm}
Suppose $L \subset \mathbb{C}_p$ is a finite extension of $K$. For $x\in X(R)(L)$, let $\eta_x: R \rightarrow L$ be the corresponding homomorphism and set
$$
f_x(q) = \sum_{n=1}^\infty \eta_x(T(n))q^n.
$$
Now suppose $k$ is an integer such that $k\in B$. Then the mapping from $X(R)_{k}(L)$ to $L[[q]]$, $x\in X(R)_{k}(L) \mapsto f_x(q)$, 
is a bijection onto the set of $q$-expansions of normalized overconvergent cuspidal eigenforms on $X_1(Np)$ over $L$  of weight $k$, character $\omega^{i-k}$ and slope $\alpha$.
Moreover, if $k > \alpha + 1$, the bijection is onto the corresponding space of classical forms. 
\end{thm}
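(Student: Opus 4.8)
The plan is to deduce this theorem from the Riesz decomposition machinery that has just been set up, together with Coleman's control theorem (\thmref{classicity}) and the fact that the Hecke algebra $R$ faithfully parametrizes systems of eigenvalues. First I would fix the integer $k \in B$ and consider the fiber $X(R)_k(L)$, i.e.\ the points of $X(R)$ lying above $k$ under the finite morphism $X(R) \to B$. Since $R$ is free of rank $d$ over $A(B)$ and the formation of $H = N_{U_B}(Q)$ commutes with the base change $A(B) \to A(B)/\mathfrak{m}_k = L$ (Riesz theory is compatible with specialization, as the factor $Q$ specializes to the corresponding slope-$\alpha$ factor of $P_i^0(k,T)$), the fiber $H \otimes_{A(B)} L$ is exactly the slope-$\alpha$ part $S_k^\dagger(N,\omega^{i-k})_\alpha$ of the overconvergent cusp forms, and $R \otimes_{A(B)} L$ is the corresponding Hecke algebra acting on it. Thus $X(R)_k(L)$ is identified with the $L$-algebra homomorphisms $R \otimes_{A(B)} L \to L$, i.e.\ with the systems of Hecke eigenvalues occurring in $S_k^\dagger(N,\omega^{i-k})_\alpha$.

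Next I would make the map $x \mapsto f_x(q)$ explicit and check it lands where claimed. For $x \in X(R)_k(L)$, the homomorphism $\eta_x$ is a system of $U_p$- and $T(n)$-eigenvalues on the slope-$\alpha$ overconvergent cusp forms of weight $k$ and character $\omega^{i-k}$; normalizing the corresponding eigenform so that $a_1 = 1$, its $q$-expansion is precisely $\sum_n \eta_x(T(n)) q^n$ because on the normalized eigenform $a_n$ equals the $T(n)$-eigenvalue (with the usual convention $U_p = T(p)$ at the level-$p$ prime). That the $U_p$-eigenvalue has valuation $\alpha$ is built into the construction of $Q$ as the slope-$(-\alpha)$ factor. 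Injectivity of $x \mapsto f_x(q)$ follows from strong multiplicity one for the abstract Hecke algebra: a normalized eigenform is determined by its system of eigenvalues, hence by $\eta_x$, hence by $x$ since $R$ acts faithfully on $H$. Surjectivity onto normalized overconvergent cuspidal eigenforms of the given weight, character and slope follows because any such form gives a system of eigenvalues on $S_k^\dagger(N,\omega^{i-k})_\alpha = H \otimes_{A(B)} L$, hence a homomorphism $R \otimes_{A(B)} L \to L$, hence a point of $X(R)_k(L)$.

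Finally, for the last sentence, I would invoke \thmref{classicity}: if $k > \alpha + 1$ then any overconvergent form of weight $k$ and slope $\alpha < k-1$ is classical, and conversely every classical normalized cuspidal eigenform of weight $k$, level $Np$, character $\omega^{i-k}$ and slope $\alpha$ is in particular overconvergent, so the bijection of the previous paragraph restricts to a bijection onto the set of $q$-expansions of classical normalized cuspidal eigenforms on $X_1(Np)$ of weight $k$, character $\omega^{i-k}$ and slope $\alpha$.

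The main obstacle I anticipate is the compatibility of the Riesz decomposition with specialization at the point $k$: one needs that $N_{U_B}(Q) \otimes_{A(B)} L$ really is the full slope-$\alpha$ subspace of the fiber $S_k^\dagger(N,\omega^{i-k})$ and that $Q$ specializes to the correct factor of $P_i^0(k,T)$ — this uses that $B$ was chosen small enough (radius $p^{-r}$) that the slope-$(-\alpha)$ part of the zero locus of $P_i^0(s,T)$ is finite flat of constant degree $d$ over $B$, together with the continuity/constancy of $d$ in families that was recorded just before the statement. Everything else is a formal consequence of the faithful action of the Hecke algebra and the identification of normalized eigenforms with systems of eigenvalues; I would keep those parts brief and cite \cite[Theorem B5.7]{col97} and \cite[Theorem A4.3]{col97} for the details.
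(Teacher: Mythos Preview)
Your proposal is a correct sketch of how this result is established, following exactly the expected route through Riesz theory, specialization, and Coleman's control theorem. However, the paper itself does not give a proof of this theorem at all: it simply records the statement and cites it as \cite[Theorem~B5.7]{col97}, noting that the classicality assertion for $k > \alpha + 1$ is the content of that reference. So there is nothing to compare against; your outline is essentially a reconstruction of Coleman's argument, and you even anticipate this by citing \cite[Theorem~B5.7]{col97} and \cite[Theorem~A4.3]{col97} yourself.
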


We need a slight refinement of the above result.  We have a surjection
\begin{eqnarray*}
  R \otimes_{A(B)} \mathrm{QF}(A(B)) \twoheadrightarrow \prod_\lambda {K_\lambda},
\end{eqnarray*}
where $K_\lambda$ are finite extensions of the quotient field $\mathrm{QF}(A(B))$ of $A(B)$ and the $\lambda : R \rightarrow K_\lambda$ vary over the (finitely many) $\overline{\mathrm{QF}(A(B))}$-valued points of $R$.  
The above surjection is an isomorphism if $R$ is semi-simple, but the semi-simplicity of the $U_p$-operator does not seem to be known. 
Let $A_\lambda$ be the integral closure of $A(B)$ in $K_\lambda$. It is also an affinoid, being a finitely generated $A(B)$-module. We let
 $\mathcal{U}_\lambda = X(A_\lambda)$ be the corresponding
rigid analytic space. There is a morphism $\mathcal{U}_\lambda \rightarrow B$ which is of finite degree $d_\lambda \ge 0$, and $\sum_\lambda {d_\lambda} = d$
if $R$ is semi-simple. 

\begin{defn}
  The $q$-expansion $\mathcal{F_\lambda} =  \sum_{n=1}^\infty \lambda(T(n)) q^n \in A_\lambda[[q]]$ is called a Coleman family. 
\end{defn} 
Note that $a_n := \lambda(T(n)) \in A_\lambda$, so the $a_n$ may be thought of as rigid analytic functions on $\mathcal{U}_\lambda$ and so 
may be evaluated at points in $\mathcal{U}_\lambda$ lying over integer points in $B$. We say that an overconvergent
cuspidal eigenform $f$ of integral weight $k_0$ and character $\omega^{i-k_0}$ lives in the Coleman family $F_\lambda$ 
if there is a homomorphism $\eta_y : A_\lambda  \rightarrow L$, corresponding to $y \in \mathcal{U}_\lambda$, for $L$ the field of definition of $f$, such that 
$\eta_{y}(F_\lambda) = \sum_{n=1}^\infty \eta_{y}(a_n) q^n = \sum_{n=1}^\infty a_n(y) q^n \in L[[q]]$ is the $q$-expansion of $f$.  

\begin{thm}
  Every normalized overconvergent cuspidal eigenform $f$ of integral weight $k_0$ and character $\omega^{i-k_0}$ lives in a Coleman family (of type $i$). 
\end{thm}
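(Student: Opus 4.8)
My plan is to feed $f$ into the two results just recorded: the parametrization of the slope‑$\alpha$ overconvergent cuspidal eigenforms of type $i$ and weight $k$ by the $L$‑points of $X(R)$, and the decomposition of the finite $A(B)$‑algebra $R$ into the normal affinoids $A_\lambda$ carrying the families $\mathcal{F}_\lambda$. So let $f=\sum_{n\ge1}a_nq^n$ be a normalized overconvergent cuspidal eigenform of integral weight $k_0$ and character $\omega^{i-k_0}$, and let $\alpha=v(a_p)$ be its slope; we take $\alpha$ to be finite (a form with vanishing $U_p$‑eigenvalue is not expected to lie in any Coleman family, and the ordinary case $\alpha=0$ is entirely analogous, using Hida's families in place of the construction recalled above). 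Since $k_0$ is an integer it lies in $\mathfrak{B}^*$, so Coleman's construction may be run with this $\alpha$ and this $k_0$: we obtain a disk $B=B_K[k_0,p^{-r}]$ with $0<p^{-r}<|\pi/p|$, the corresponding factor $Q$ of $P_i^0(s,T)$ over $B$, the free rank‑$d$ module $H=N_{U_B}(Q)$, the Hecke algebra $R\subset\mathrm{End}_{A(B)}(H)$ which is free of rank $d$ over $A(B)$, the affinoid $X(R)$ finite of degree $d$ over $B$, and the attached fields $K_\lambda$, integral closures $A_\lambda$, and spaces $\mathcal{U}_\lambda=X(A_\lambda)$. By the parametrization theorem above, taken over $\mathbb{C}_p$ and specialized to the integer weight $k=k_0\in B$, the form $f$ — a normalized overconvergent cuspidal eigenform on $X_1(Np)$ of weight $k_0$, character $\omega^{i-k_0}$ and slope $\alpha$ — equals $f_x$ for a unique point $x$ of the fibre $X(R)_{k_0}$; this fibre is finite, so $f$ is defined over a finite extension $L$ of $K$, and the associated homomorphism $\eta_x:R\to L$ satisfies $\eta_x(T(n))=a_n$ for all $n\ge1$.

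It remains to lift $x$ to one of the $\mathcal{U}_\lambda$, and this is the only delicate step, because $R$ need not be reduced — the semisimplicity of $U_p$ is not known — so the surjection $R\otimes_{A(B)}\mathrm{QF}(A(B))\twoheadrightarrow\prod_\lambda K_\lambda$ need not be injective. Still, $R$ is finite over $A(B)$, so the image of $R$ under each $\lambda:R\to K_\lambda$ is integral over $A(B)$ and hence lies in $A_\lambda$, giving finite morphisms $\iota_\lambda:\mathcal{U}_\lambda=\mathrm{Sp}(A_\lambda)\to\mathrm{Sp}(R)=X(R)$. Writing $X(R)^{\mathrm{red}}$ as the disjoint union of its irreducible components $\mathrm{Sp}(R_j)$ — as $R$ is flat over the domain $A(B)$, these correspond to the minimal primes of $R$ and hence to the $\lambda$, and each $R_j$ is a domain, finite over $A(B)$, with fraction field $K_{\lambda}$ and with $A_\lambda$ as its normalization inside $K_\lambda$ — one sees that $\coprod_\lambda\mathcal{U}_\lambda\to X(R)^{\mathrm{red}}$ is finite and surjective. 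Since $X(R)^{\mathrm{red}}\to X(R)$ is a homeomorphism, $\coprod_\lambda\iota_\lambda$ is surjective on $\overline{K}$‑points, so $x$ is the image of some point $y\in\mathcal{U}_\lambda(L')$ for a suitable $\lambda$ and finite extension $L'/L$. (Equivalently: $\ker\eta_x$ contains a minimal prime $\mathfrak p$ of $R$, so $\eta_x$ factors through the domain $R/\mathfrak p$, whose normalization is some $A_\lambda$; extending to this normalization over a finite extension produces $y$.)

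Now $y$ lies over $k_0$, because the triangle $\mathcal{U}_\lambda\to X(R)\to B$ commutes and $x\in X(R)_{k_0}$, and the composite $R\to A_\lambda\xrightarrow{\eta_y}L'$ is $\eta_x$ followed by $L\hookrightarrow L'$; hence $\eta_y(a_n)=\eta_y(\lambda(T(n)))=\eta_x(T(n))=a_n$ for every $n\ge1$, so $\eta_y(\mathcal{F}_\lambda)=\sum_{n\ge1}a_nq^n$ is the $q$‑expansion of $f$. By the definition of living in a Coleman family, $f$ lives in $\mathcal{F}_\lambda$, and $\mathcal{F}_\lambda$ is of type $i$ since the whole construction was carried out inside $S^\dagger(N,i)$. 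I expect the main obstacle to be precisely the second paragraph: passing from the possibly non‑reduced Hecke algebra $R$ to an honest family through $f$, i.e. seeing that the finitely many generic branches $\lambda$ already exhaust the fibre $X(R)_{k_0}$ — which is what the reduced‑structure‑and‑normalization argument delivers.
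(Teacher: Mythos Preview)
Your proof is correct and follows the same route as the paper. Indeed, the parenthetical argument you give---that $\ker\eta_x$ contains a minimal prime $\mathfrak p$ of $R$, so $\eta_x$ factors through the domain $R/\mathfrak p$, whose normalization is some $A_\lambda$, and extending to $A_\lambda$ yields the desired $y$---is exactly the paper's proof; you have simply added a geometric reformulation (surjectivity of $\coprod_\lambda\mathcal U_\lambda\to X(R)$ on $\overline K$-points via the reduced structure and normalization) and a few more details around it.
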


\begin{proof}
  Let $L$ be the field of definition of $f$. 
  By the previous theorem, there is an affinoid ball $B = B_K[k_0,p^{-r}]$, for some $r$ as above, and a morphism $\eta_x: R \rightarrow L$
  %for a sufficiently large finite extension $L$ of $K$ 
  such that $f_x = f$. Let $\mathfrak{m}_x = \ker \eta_x$, a maximal ideal of $R$. Take a minimal prime $\mathfrak{p}$ of $R$ such that
  $\mathfrak{p} \subset \mathfrak{m}_x$. Then $R/\mathfrak{p}$ is a domain and its quotient field is a finite integral extension of $\mathrm{QF}(A(B))$. 
  Consider the homomorphism $\lambda : R \rightarrow R/\mathfrak{p} \subset A_\lambda$. We claim that $f$ lies in the Coleman family $\mathcal{F}_\lambda$.
  But this is obvious in view of the fact that $\eta_x$ is the composition of the maps $\lambda$ and (any extension $\eta_y$ to $A_\lambda$ of) the
  canonical projection $R/\mathfrak{p} \rightarrow R/\mathfrak{m}_x = L$.
\end{proof}

Everything we have said above holds for the subspaces of $N$-new forms, as is easy to see. Coleman calls these forms $p'$-new  
on \cite[Definition, p. 467]{col97}. Summarizing the above discussion, and including the case of newforms, and the case 
$\alpha = 0$ which was known earlier by work of Hida \cite{hi86}, \cite{hi86b}, and introducing an auxiliary character $\chi$ of level $N$, we obtain the following theorem:

\begin{thm} \label{familyU} Let $p \ge 5$, $p \nmid N \ge 1$, $\chi$ a character of level $N$ and $k_0\in \mathbb{Z}$. 
Suppose that $f$ is a normalized overconvergent cuspidal eigenform of weight $k_0$, level $Np$, character $\chi \omega^{i-k_0}$ and slope $\alpha \ge 0$, defined over 
a finite extension $L$ of $K$. 
%In the case $i = 0$, suppose moreover that $a_p^2 \ne 4 \chi(p)p^{k_0-1}$. 
Then there exists an $r \in \mathbb Q \cap (-1+\frac{1}{p-1}, \infty)$ and a rigid analytic space $\mathcal{U} \rightarrow  B_K[k_0,p^{-r}]$  
and a family $\mathcal{F}=\sum_{n = 1}^\infty a_nq^n$, where $a_n$'s are rigid analytic functions on $\mathcal{U}$ such that the following statements hold:
\begin{enumerate}
 \item For every integer $k \in B_K[k_0,p^{-r}]$, and every point $\eta_y \in \mathcal{U}$ lying over $k$, the series $f_k:=\sum_{n = 1}^\infty \eta_y(a_n)q^n$ coincides with the $q$-expansion of a 
        normalized overconvergent cuspidal eigenform of weight $k$, level $Np$, character $\chi \omega^{i-k}$ and slope $\alpha$. Moreover, if $f$ is $N$-new, then all the $f_k$ 
        are $N$-new.
\item  There is an $\eta_{y_0} \in \mathcal{U}$ lying over $k_0$ such that the series  $\sum_{n =  1}^\infty \eta_{y_0}(a_n)q^n$ coincides with the $q$-expansion of $f$.
%\item  For  every integer $k \in B_K[k_0,p^{-r}]$, 
%If $t\in \mathbb{Z}_{\ge 0}$ and $k\in \mathbb{Z}$ are such that $k \equiv k_0 \mod p^{M_{\mathcal{F}}+t}$, then
%\begin{equation}\label{familycong}
%  f_k \equiv f \mod p^{}.
%\end{equation}
\end{enumerate}
\end{thm}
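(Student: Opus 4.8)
The plan is to deduce Theorem~\ref{familyU} from the material already recalled --- the two-variable cuspidal characteristic series $P^0_i(s,T)$, the Riesz decomposition over an affinoid disc, Coleman's family theorem (\cite[Theorem B5.7]{col97} in the form stated above), and the refinement via minimal primes of the Hecke algebra recalled just above --- making only three cosmetic enlargements: allowing the tame nebentypus $\chi$, allowing $\alpha=0$, and tracking the $N$-new condition. First I would dispose of $\chi$: a character of level $N$ acts on all the Banach modules $M(t,v)$, $S(t,v)$ through the diamond operators $\langle d\rangle$, $d\in(\mathbb{Z}/N\mathbb{Z})^*$, and these commute with $U_p$, with the Hecke operators $T(n)$, and with the $D=(\mathbb{Z}/p\mathbb{Z})^*$-action; so one passes to the $\chi$-isotypic summand everywhere --- in $S^\dagger(N,i)$, in the Fredholm series (replacing $P^0_i(s,T)$ by its factor $P^0_{i,\chi}(s,T)$), and in $\mathbb{T}$ --- exactly as one passes to the $\omega^i$-eigenspace for $D$. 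This changes nothing structurally, so I will suppress $\chi$ below (equivalently, twist $f$ by the Teichm\"uller lift of $\chi^{-1}$, argue, and twist back, but the eigenspace formulation keeps all weights integral).

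For $\alpha>0$ I would apply \cite[Theorem B5.7]{col97} as recalled above to the integer weight $k_0$ and slope $\alpha$, obtaining $r\in\mathbb{Q}\cap(-1+\tfrac{1}{p-1},\infty)$, the disc $B=B_K[k_0,p^{-r}]$, the finite flat slope-$(-\alpha)$ factor $Q$ of $P^0_i(s,T)$ over $B$, the free rank-$d$ submodule $H=N_{U_B}(Q)\subset S^\dagger(N,i)_B$, the finite $A(B)$-algebra $R\subset\mathrm{End}_{A(B)}(H)$, and the degree-$d$ cover $X(R)\to B$. Since $f$ is an overconvergent cuspidal eigenform of weight $k_0\in B$, character $\omega^{i-k_0}$ and slope $\alpha$, the bijection in that theorem specialised at the integer $k_0$ (classicity of its target, asserted only for $k>\alpha+1$, is not needed) yields $\eta_x:R\to L$ with $f_x=f$. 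I would then run the refinement recalled above: take a minimal prime $\mathfrak{p}\subset\mathfrak{m}_x:=\ker\eta_x$ of $R$, set $K_\lambda=\mathrm{QF}(R/\mathfrak{p})$, let $A_\lambda$ be the integral closure of $A(B)$ in $K_\lambda$ (a finite $A(B)$-module, hence affinoid), put $\mathcal{U}:=X(A_\lambda)$ with its finite morphism to $B$, and take $\mathcal{F}=\sum_n\lambda(T(n))q^n$ for $\lambda:R\to R/\mathfrak{p}\hookrightarrow A_\lambda$. Then (1) is the bijection theorem applied at each integer $k\in B$, plus the observation that specialising $\mathcal{F}$ at $\eta_y$ over $k$ gives an eigenform because $\lambda$ factors through the domain $R/\mathfrak{p}$; and (2) holds because, choosing $\eta_{y_0}$ to extend the canonical surjection $R/\mathfrak{p}\twoheadrightarrow R/\mathfrak{m}_x=L$, one has $\eta_x=\eta_{y_0}\circ\lambda$, so $\eta_{y_0}(\mathcal{F})=\sum_n\eta_x(T(n))q^n=f$.

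For $\alpha=0$ the slope-$0$ locus is not compact in the $U_p$-eigenvalue direction and Coleman's affinoid argument does not apply verbatim; but the existence of a Hida family through an ordinary cuspidal eigenform of weight $k_0\ge 2$ --- in fact extending over all of $\mathfrak{B}^*$, with the tame-character and $N$-new refinements --- is classical, due to Hida \cite{hi86,hi86b}, and gives $\mathcal{U}\to B$ with the required properties. For the $N$-new assertion I would rerun the construction with Coleman's $p'$-new subspaces $S^{\dagger,\mathrm{new}}(N,i)$ (stable under $U_p$ and all $T(n)$, with characteristic series dividing $P^0_i$) in place of the full spaces; being $N$-new is membership in the summand $S^{\dagger,\mathrm{new}}(N,i)_B$, which is preserved by $U_B$, by $H$, and by every specialisation, so $f$ $N$-new forces every $f_k$ to be $N$-new.

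The hard part will not be any single computation but the one genuinely delicate structural point: the semisimplicity of the $U_p$-action is not known, so $R\otimes_{A(B)}\mathrm{QF}(A(B))$ need not be a product of fields, which is exactly why the family must be extracted through a minimal prime $\mathfrak{p}\subset\mathfrak{m}_x$ rather than through an idempotent. I would need to check that such a $\mathfrak{p}$ lies inside $\mathfrak{m}_x$ (clear, as $\mathfrak{m}_x$ contains some minimal prime) and that $R/\mathfrak{p}$, a domain finite over the PID $A(B)$, has integral closure $A_\lambda$ which is again affinoid and --- after possibly shrinking $B$, i.e.\ enlarging $r$, while keeping $k_0\in B$ --- finite free over $A(B)$, so that $\mathcal{U}\to B$ is a well-behaved finite cover over which $\mathcal{F}$ is defined. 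This final shrinking is the only place where the radius $p^{-r}$ of the statement gets pinned down.
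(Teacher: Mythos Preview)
Your proposal is correct and follows essentially the same approach as the paper: the paper derives Theorem~\ref{familyU} by summarizing the preceding discussion (Coleman's characteristic series $P^0_i(s,T)$, the Riesz decomposition giving $H$ and the finite Hecke algebra $R$ over $A(B)$, the bijection of \cite[Theorem B5.7]{col97}, and the minimal-prime refinement producing $\mathcal{U}=X(A_\lambda)$ and $\mathcal{F}_\lambda$), and then notes that the same arguments go through for the $N$-new subspaces, that the case $\alpha=0$ is covered by Hida \cite{hi86,hi86b}, and that an auxiliary tame character $\chi$ can be inserted throughout. Your only superfluous remark is the justification offered for treating $\alpha=0$ separately --- Coleman's slope-$\alpha$ affinoid argument does not actually break down at slope $0$ for lack of compactness --- but since you (and the paper) simply cite Hida there, this is harmless.
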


\begin{defn}\label{radius}
We call the radius $p^{-r}$ appearing in  \thmref{familyU} the Coleman radius of the family $\mathcal{F}$.
Let $r_f$ be such that $p^{-r_f}$ is the supremum of the numbers $p^{-r}$ for which \thmref{familyU} holds.
Let $M_f \in \mathbb{Z}_{\ge 0}$ be the smallest non-negative integer such that $p^{-{M_f}}$ is a Coleman radius.  
\end{defn}
\noindent We remark that if $r_f \in {\mathbb Z}$, then $M_f = r_f$ if and only if $p^{-r_f}$ is a Coleman radius.
If the slope  $\alpha =0$, we know that\footnote{There is an isomorphism 
$B(0, |\pi/p|) \xrightarrow{\sim} B(0,1)$ induced by $s \mapsto (1+p)^s-1$. Coleman works on the left hand side of this 
isomorphism whereas Hida works on the right, so that when $\alpha = 0$, we have 
$r_f = -1 + \frac{1}{p-1}$ and $M_f = 0$.} $M_f = 0$ by the work of Hida \cite{hi86}, \cite{hi86b}.

%\begin{defn}\label{radius}
%We call the radius $p^{-r}$ appearing in  \thmref{familyU} the Coleman radius of the family $\mathcal{F}$. 
%Set $M_{f} = \lceil r_f \rceil \in \mathbb{Z}_{\ge 0}$, where $p^{-r_f}$ is the supremum of the numbers $p^{-r}$ for which \thmref{familyU} holds.
%%the {\bf Coleman radius} of the family $\mathcal{F}$ passing through $f$. 
%\end{defn}
%\noindent We remark that $p^{-M_f}$ is an actual Coleman radius for some family passing through $f$. In fact, it is the largest radius $p^{-M}$, where $M$ is a non-negative
%integer. If the slope  $\alpha =0$, by the work of Hida, we know that $M_{f}=0$ \cite{hi86}, \cite{hi86b}.

\subsection*{Congruences}

We now show that the forms $f_k$ in a Coleman family are all congruent modulo $p$, 
at least if one takes $k$ in the interior of the ball $B_K[k_0,p^{-M_f}]$ in the case when $M_{f} = r_f$. 
%at least if one increases $r_f$ slightly in the case when $r_f$ is an integer. 
To this end, let $\delta_f = \delta_{M_f,r_f}$ denote the Kronecker delta function defined by
\begin{eqnarray}
  \label{delta}
\delta_f & = & \begin{cases}
                   1, \quad {\rm if~} M_{f} = r_{f},\\
                   0,  \quad {\rm otherwise}.
                \end{cases}
\end{eqnarray}
If the slope $\alpha =0$, we have $\delta_f = 0$. 
%\begin{eqnarray}
%  \label{delta}
%\delta(r) & = & \begin{cases}
%                   1, \quad {\rm if~} r \in \mathbb{Z},\\
%                   0,  \quad {\rm otherwise}.
%                \end{cases}
%\end{eqnarray}

\begin{prop}
 Let the notation be as in Theorem~\ref{familyU} and $M_f$ 
%= \lceil r_f \rceil$ 
be as in Definition~\ref{radius}.
 For  every integer $k \in B_K[k_0,p^{-M_f}]$, with $k\equiv k_0 \mod
p^{M_f+\delta_f}$, we have
\begin{equation}\label{familycong}
  f_k \equiv f \mod p.
\end{equation}
\end{prop}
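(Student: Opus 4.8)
The statement is essentially a cleaned-up, uniform-in-$k$ version of the congruence already recorded (in the commented-out Theorem~\ref{M}) that says $f_k \equiv f \bmod p^{t+1}$ whenever $k \equiv k_0 \bmod p^{M+t}$, where $M = \max\{\lceil r\rceil, \lceil r+1+C\rceil\}$ for a bound $p^C$ on the $a_n$ on $B$. The plan is to run exactly that computation but to be more careful about when the coefficient $\nu = 1$ term already forces congruence mod $p$, so that one only needs $k \equiv k_0 \bmod p^{M_f + \delta(r_f)}$ rather than mod $p^{M_f + 1}$. First I would fix a Coleman family $\mathcal{F} = \sum a_n q^n$ over $B = B_K[k_0, p^{-r}]$ passing through $f$ with $p^{-r}$ as close to $p^{-r_f}$ as needed; recall $A(B) = K\langle \frac{s-k_0}{p^r}\rangle$, write each $a_n = \sum_{\nu\ge 0} c_{n,\nu}\big(\tfrac{s-k_0}{p^r}\big)^\nu$ with $c_{n,\nu}\in K$, $|c_{n,\nu}|\to 0$, and invoke boundedness of $\mathcal{F}$ on $B \times X(0)$ to get a uniform bound $|a_n(s)| \le p^C$ on $B$, whence $|c_{n,\nu}| \le p^C$ for all $n,\nu$.

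**Main estimate.** For $k \in \mathbb{Z} \cap B$ with $k \equiv k_0 \bmod p^{M_f + \delta(r_f)}$, write $k - k_0 = p^{M_f + \delta(r_f)} a$ with $a \in \mathbb{Z}$, so $|k-k_0| \le p^{-(M_f+\delta(r_f))}$. Then
\begin{align*}
|a_n(k) - a_n(k_0)| &= \Big| \sum_{\nu \ge 1} c_{n,\nu} \Big(\tfrac{k-k_0}{p^{r}}\Big)^\nu \Big|
\le p^C \cdot \max_{\nu \ge 1} \big| p^{(-r + M_f + \delta(r_f))\nu} \big|
= p^C \cdot p^{-(M_f - r + \delta(r_f))},
\end{align*}
the last equality because $M_f - r + \delta(r_f) \ge 0$ (as $M_f = \lceil r_f\rceil \ge \lceil r\rceil \ge r$, and when $r_f \in \mathbb{Z}$ one has $M_f = r_f$ but the extra $\delta(r_f) = 1$ saves a factor of $p$), so the exponent is minimized at $\nu = 1$. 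Now one must check this bound is $\le |p|$, i.e. $C - (M_f - r + \delta(r_f)) \le -1$, i.e. $M_f + \delta(r_f) \ge r + C + 1$. This is where the choice of $r$ enters: by Definition~\ref{radius}, $p^{-M_f}$ is itself an attainable Coleman radius, so we may take the family over $B = B_K[k_0, p^{-M_f}]$, i.e. $r = M_f$; then the required inequality reads $\delta(r_f) \ge C + 1$. If $r_f \notin \mathbb{Z}$ then $M_f = \lceil r_f\rceil > r_f$, and one can instead pick $r$ with $r_f < r < M_f$ close enough to $M_f$ that $M_f \ge r + C + 1$ still holds, since the constant $C$ attached to a given family is bounded as the radius shrinks toward $p^{-M_f}$ — this is the point that needs the most care.

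**The main obstacle.** The genuinely delicate step is controlling the constant $C$: a priori $C$ depends on the chosen family and its radius, and one needs that for radii $p^{-r}$ with $r$ slightly below $M_f$ (resp. equal to $M_f$, when $r_f \notin \mathbb{Z}$, resp. $r_f \in \mathbb{Z}$) one can arrange $C$ small enough — ideally $C \le 0$, i.e. the $a_n$ are power-bounded, which follows from the fact that the Hecke operators $T(n)$ act with operator norm $\le 1$ on the relevant submodule, so $a_n = \lambda(T(n))$ lies in the power-bounded subring $A_\lambda^0$ and hence $|a_n(s)| \le 1$ on $B$. With $C \le 0$ in hand, the inequality $M_f + \delta(r_f) \ge r + C + 1$ is satisfied whenever $r \le M_f + \delta(r_f) - 1$: if $r_f \in \mathbb{Z}$ this is $r \le M_f = r_f$, and we take $r = r_f$; if $r_f \notin \mathbb{Z}$ this is $r \le M_f - 1$, and since $r_f < M_f - 1$ is false in general we instead note $M_f - r \ge \delta(r_f) = 0$ already suffices once $C \le 0$ gives $M_f - r + \delta(r_f) \ge 0 \ge C + (1 - 1)$... more precisely with $C \le 0$ we need only $M_f - r + \delta(r_f) \ge 1$, which holds for $r = \lceil r_f \rceil - 1 \ge r_f$ when $r_f\notin\mathbb Z$ and for $r = r_f$, $\delta = 1$ when $r_f \in \mathbb Z$. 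Assembling: $|a_n(k) - a_n(k_0)| \le |p|$ for all $n$, hence $f_k \equiv f \bmod p$, as $f_k = \sum a_n(k) q^n$ and $f = \sum a_n(k_0) q^n$.
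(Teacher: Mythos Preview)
Your proposal has a genuine conceptual gap. You expand each Fourier coefficient as
\[
a_n = \sum_{\nu\ge 0} c_{n,\nu}\Big(\tfrac{s-k_0}{p^r}\Big)^{\nu} \in A(B) = K\Big\langle \tfrac{s-k_0}{p^r}\Big\rangle,
\]
but this is only valid when the Coleman family is parametrized by $B$ itself, i.e.\ when $\mathcal{U}=B$ in Theorem~\ref{familyU}. In general $\mathcal{U}=X(A_\lambda)$ is a finite cover of $B$ and the $a_n = \lambda(T(n))$ lie in the integral extension $A_\lambda$ of $A(B)$, not in $A(B)$. No such one-variable power-series expansion is available, and the entire estimate collapses. (The commented-out argument you are modelling your proof on is explicitly stated for the special case $\mathcal{U}=B$.) There is also an arithmetic slip in your case analysis: you assert that ``$r=\lceil r_f\rceil -1\ge r_f$ when $r_f\notin\mathbb{Z}$'', which is false (e.g.\ $r_f=1.5$ gives $\lceil r_f\rceil-1=1<r_f$), so even in the $\mathcal{U}=B$ case your claimed inequality $M_f-r+\delta(r_f)\ge 1$ does not follow for admissible $r>r_f$; you only get the exponent strictly positive, i.e.\ congruence modulo the maximal ideal rather than literally modulo $p$.

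The paper circumvents both problems by an abstract local-ring argument. One fixes any $r$ with $r_f<r<M_f+\delta(r_f)$ (so that $k\equiv k_0\bmod p^{M_f+\delta(r_f)}$ places $k$ in the \emph{interior} of $B_K[k_0,p^{-r}]$), and uses that the $a_n$ are power-bounded, hence lie in $A_\lambda^0$. One then passes to the formal completion $\widetilde{A}^0(B)=\mathcal{O}_K[[(s-k_0)/p^r]]$ and its integral closure $\widetilde{A}_\lambda^0$; because $k$ is interior to $B$, the specializations $\eta_y,\eta_{y_0}$ extend from $A_\lambda^0$ to $\widetilde{A}_\lambda^0$. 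The point is that $\widetilde{A}_\lambda^0$ is a complete \emph{local} ring, so both extended specializations factor through its unique residue field, forcing $a_n(k)\equiv a_n(k_0)$ in $\mathcal{O}_L/\mathfrak{m}_L$. This argument never writes $a_n$ as an explicit power series and works uniformly over the finite cover $\mathcal{U}\to B$.
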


\begin{proof}
Choose $r$ such that $r_f  < r < M_f + \delta_f$, and let $B =  B_K[k_0,p^{-r}]$.
The Hecke operators in $R$ have norm bounded by $1$, so the $a_n$ also have norm bounded by $1$.
In particular, the $a_n$ lie in the subring of power bounded elements $A_\lambda^0$ of $A_\lambda$.
Let $\mathcal{O}_K$ be the ring of integers of $K$.
Note that $A_\lambda^0$ contains $A^0(B) = \mathcal{O}_K \langle (s-k_0)/p^{r} \rangle$, the power bounded 
elements in $A(B)$, and that  $A_\lambda^0$ is the integral closure of  $A^0(B)$ in $K_\lambda$. 
Let $\widetilde{A}(B)$ be the completion of $A(B)$ in the $s'$-adic topology with $s' = (s-k_0)/p^{r}$ and
$\widetilde{A}^0(B) = \mathcal{O}_K [[ (s-k_0)/p^{r} ]]$ be the subring of $\widetilde{A}(B)$ consisting of
power bounded elements. Let $\widetilde{K}_\lambda$ be the smallest field extension of ${\rm QF}(\widetilde{A}^0(B))$
containing $K_\lambda$, and $\widetilde{A}_\lambda^0$ the integral
closure of $\widetilde{A}^0(B)$ in $\widetilde{K}_\lambda$. 
Since $\widetilde{A}^0(B)$ is a complete local ring, we see that $\widetilde{A}_\lambda^0$ is a complete local ring.
Clearly, ${A}_\lambda^0 \subset \widetilde{A}_\lambda^0$. The above rings sit in the following diagram:
%gives that for each $n \ge
%1$, $a_n \in \widetilde{A}_\lambda^0$.
$$
\begin{tikzcd}[row sep=scriptsize, column sep=scriptsize]
& K_\lambda \arrow[from= dl, hook] \arrow[rr, hook] \arrow[from= dd, hook] & & \widetilde{K}_\lambda
\arrow[from= dl, hook] \arrow[from=dd, hook] \\
A_\lambda^0 \arrow[rr, hook, crossing over] %\arrow[dd]
& & \widetilde{A}_\lambda^0 \\
& {\rm QF}(A(B)) 
%(\mathcal{O}_K \langle (s-k_0)/p^r \rangle) 
  \arrow[from=dl, hook] \arrow[rr, hook] & & {\rm QF}(\widetilde{A}^0(B)).
%(\mathcal{O}_K [[ (s-k_0)/p^r ]]) 
\arrow[from= dl, hook] \\
%\mathcal{O}_K \langle (s-k_0)/p^r \rangle 
A^0(B) \arrow[rr, hook] \arrow[uu, hook]
& & 
%\mathcal{O}_K [[ (s-k_0)/p^r ]]
\widetilde{A}^0(B)
\arrow[uu, hook, crossing over]\\
\end{tikzcd}
$$

Let $\eta_y$, $\eta_{y_0}$ be two homomorphisms $A_\lambda^0 \rightarrow
\mathcal{O}_L$ lying above $k$ and $k_0$ corresponding to $f_k$ and $f_{k_0}$, with kernels the prime ideals $\mathfrak{p}_y$, $\mathfrak{p}_{y_0}$, respectively.
Because of our choice of $k$ and $r$, the point $k$ lies in the interior of the ball $B$.  
So the restrictions of $\eta_y$, $\eta_{y_0}$ to $A^0(B) = \mathcal{O}_K \langle (s-k_0)/p^r \rangle$
extend to  homomorphisms
$$
%\widetilde{\eta}_y, \widetilde{\eta}_{y_0}:
\widetilde{A}^0(B) = \mathcal{O}_K [[ (s-k_0)/p^r ]] \rightarrow \mathcal{O}_L. % \subset \mathcal{O}_{\mathbb{C}_p}.
$$
Since $\widetilde{A}_\lambda^0$ is a finite integral extension of $\widetilde{A}^0(B)$,  % $\mathcal{O}_K [[ (s-k_0)/p^r ]]$, 
we can extend the above homomorphisms to homomorphisms
$\widetilde{\eta}_y$, $\widetilde{\eta}_{y_0}:\widetilde{A}_\lambda^0 \rightarrow \mathcal{O}_{L'}$,
for some finite extension $L'$ of $L$. 
%Moreover, using the fact that the images of $\mathcal{O}_K [[ (s-k_0)/p^r ]]$ lie in $\mathcal{O}_{\mathbb{C}_p}$,
%it is easy to see that
%$$
%\widetilde{\eta}_y, \widetilde{\eta}_{y_0}:\widetilde{A}_\lambda^0
%\rightarrow \mathcal{O}_{\mathbb{C}_p}.
%$$
Let $\widetilde{\mathfrak{m}}_\lambda$ be the maximal ideal of the local
ring $\widetilde{A}^0_\lambda$. Let $\widetilde{\mathfrak{p}}_y$ and
$\widetilde{\mathfrak{p}}_{y_0}$ denote the prime ideals 
corresponding to the kernels of $\widetilde{\eta}_y$ and
$\widetilde{\eta}_{y_0}$. Clearly the two projections $\widetilde{A}^0_\lambda  \rightarrow
\widetilde{A}^0_\lambda /  \widetilde{\mathfrak{p}}_y
\subset \mathcal{O}_{L'}$ and
$\widetilde{A}^0_\lambda  \rightarrow \widetilde{A}^0_\lambda /
 \widetilde{\mathfrak{p}}_{y_0} \subset
\mathcal{O}_{L'}$ both further project to give the {\it same}
morphism
$\widetilde{A}^0_\lambda  \rightarrow \widetilde{A}^0_\lambda /
 \widetilde{\mathfrak{m}}_\lambda \subset
\mathcal{O}_{L'}/\mathfrak{m}_{L'}$. It follows that
the projections $A^0_\lambda  \rightarrow A^0_\lambda /  \mathfrak{p}_y 
  \subset \mathcal{O}_L$
  and  $A^0_\lambda  \rightarrow A^0_\lambda /  \mathfrak{p}_{y_0} \subset \mathcal{O}_L$ both further project to give the same morphism 
  $A^0_\lambda  \rightarrow \mathcal{O}_L/\mathfrak{m}_L$.
\end{proof}

We make some remarks. Firstly, the proposition implies that $\bar\rho_{f_k} \simeq \bar\rho_f$ up to semisimplification. 
%For the existence of a 
%Galois representation attached to $\mathcal{F}$ using the theory of pseudo-representations and hence the existence of the Galois representations 
%$\rho_{f_k}$ (even for $f_k$ not classical), see Nuccio-Ochiai \cite{no}, although in this paper we only consider Galois representations attached to 
%classical specializations $f_k$. 
Secondly, one might expect that the congruences \eqref{familycong} hold for all weights $k \in B_K[k_0,p^{-M_f}]$, as is the case when 
$\alpha = 0$ by Hida theory, but we have not been able to show this (but see \cite[Theorem D]{CM98}). 
However, when the map $\mathcal{U} \rightarrow B$ has degree 1, so that $\mathcal{U} = B$ is a ball, one can 
indeed show this under an additional assumption. In fact, one can prove the following stronger Kummer-like congruences
for the  members of the Coleman family, though they will not be used later.

The open ball $\mathfrak{B}^*=B_{K}(0,|\pi/p|)$ is isomorphic to the open unit ball $B^0=B_K(0,1)$. The map $s \mapsto (1+p)^s-1$ 
induces a morphism of rigid analytic functions $\mathcal{O}_K[[T]] \subset A(B^0) \rightarrow % \xrightarrow{\simeq}
A(\mathfrak{B}^*)$ given by
$f(T)  \mapsto f((1+p)^s-1)$, for $f(T) \in \mathcal{O}_K[[T]]$.  
Recall that, with notation as above, we have $B = B_K[k_0,p^{-r}] \subset \mathfrak{B}^*$, for some $r$, % and some integer $k_0$, 
%Then $A^0(B)=\mathcal{O}_k\langle (s-k_0)/p^r \rangle$ is the 
%subring of $A(B)=K \langle (s-k_0)/p^r \rangle$ consisting of power bounded elements.  
so there is a map $A(\mathfrak{B}^*) \rightarrow A(B)$ obtained by restricting functions.  
Clearly, the pullback of the specialization map $\eta_k: A(B) \rightarrow \bar{\mathbb{Q}}_p$, $s \mapsto k$ under the composition of the two maps
above 
$$ \mathcal{O}_K[[T]] \rightarrow A(\mathfrak{B}^*) \rightarrow A(B)$$
is the specialization map $\mathcal{O}_K[[T]] \rightarrow \bar{\mathbb{Q}}_p$, $T \mapsto (1+p)^k-1$.

Now let $\mathcal{F} = \sum_{n=1}^\infty a_n q^n$ be a Coleman family specializing to $f$ at weight $k_0$, with $a_n$ rigid analytic functions on $\mathcal{U}$. If $\mathcal{U} = B$, 
then $a_n \in A(B)$, for all $n\ge 1$. For each $a_n \in A(B)$, assume there is a power series $f_n(T) \in \mathcal{O}_K[[T]]$  such that $a_n(s)=f_n((1+p)^s-1)$,
for all $s \in B$.  Write $f_n(T)=\sum_{n=0}^\infty c_\nu T^\nu$, where $c_\nu\in \mathcal{O}_K$. Then for all
%Moreover, for any $K$-algebra homomorphism $\eta_k: A^0(B)\rightarrow \bar{\mathbb{Q}}_p$, $a_n(s)\mapsto a_n(k)$, 
%there exists a homomorphism $A^0(B^0) \rightarrow \bar{\mathbb{Q}}_p$ such that the following diagram commute.
%
%\begin{equation}
%  \label{U vs u}
%\begin{tikzcd}
%A^0(B) \arrow[r, "\eta_k"] 
%& \bar{\mathbb{Q}}_p 
%\\
%A^0(\mathfrak{B}^*)  \arrow[u, twoheadrightarrow]
%& A^0(B^0) \arrow[l, "\simeq"] \arrow[u, dashed, "f_n(T)\mapsto a_n(k)"].
%\end{tikzcd}
%\end{equation}
%
integers $k$ of the form $k=k_0 + p^{\lceil r \rceil +t}a$, for some integers $a$ and $t\ge 0$ (so  $k\in \mathbb{Z} \cap B$), we have, for each $n \ge 1$, 
\begin{align*}
   |a_n(k)-a_n(k_0)| & = %|\eta_k(a_n)-\eta_{k_0}(a_n)| 
                         |f_n((1+p)^k-1)-f_n((1+p)^{k_0}-1)| \\
                    & = \left| \sum_{\nu = 0}^\infty  c_{\nu}[((1+p)^k-1)^\nu-((1+p)^{k_0}-1)^\nu] \right| \\
                    & \le \sup_{\nu \ge 1}|c_\nu| |(1+p)^k-(1+p)^{k_0}|\\
                    & \le |p(k-k_0)| \\
                    & \le |p^{t+1}|.
\end{align*}
Taking the infimum over all $r$, 
and recalling that $M_f$ is the non-negative integer defined in Definition~\ref{radius}, we obtain a system of Kummer congruences:
$$
  k \equiv k_0 \mod p^{M_f +t} \implies f_k \equiv f \mod p^{t+1},
$$
for all $t \ge 0$.

\section{Main result}
  \label{section main}

We now prove Theorem~\ref{main1}. We start with a  useful definition. 

\begin{defn}
  Let $\chi$ be a character of level $N$. If $f$ is an overconvergent form of integer weight $k$, level $Np$ and character $\chi\omega^{j}$, for some integer $j$, 
  then we say that $f$ has weight-character $\chi \omega^{j}\langle\langle\cdot \rangle \rangle^k$.
\end{defn}

\begin{thm}\label{main22}
Suppose $p \ge 5$ is a prime and $N$ is a positive integer such that $(N,p)=1$. Let $f\in S_k(N,\chi)$ be a classical eigenform of finite slope $\alpha$. 
Assume that a $p$-stabilization $f_k$ of $f$ has slope $\alpha$. Let $M_{{f_k}}$ %= \lceil r_{f_k} \rceil$ 
be as in \defref{radius} and $\delta_{{f_k}}$ as in \eqref{delta}. 
%Let $\mathcal{F}$ be a Hida-Coleman family passing through $f_k$, with corresponding radius $M_{\mathcal{F}} \geq 0$.
Let $\kappa \in \{2,3, \dots, p^{M_{f_{k}}+ \delta_{f_k}}+1 \}$ be the 
unique integer such that $k \equiv 2-\kappa  \mod p^{M_{f_k}+\delta_{f_k}}$. Then there exists a classical eigenform $g \in S_l(N,\chi)$ of slope $\alpha + \kappa-1$ such that
$$
g \equiv\theta^{\kappa-1} f \mod  p.
$$
If $f$ is a newform, so is $g$. Moreover, there is a non-negative integer $M$ such that
%if $M$ is as in \defref{radius} for the form $\theta^{\kappa-1} f_{2-\kappa}$, where $f_{2-\kappa}$ is a 
%a weight $2-\kappa$ specialization of a Coleman family passing through $f_k$, 
the
%$\mathcal{G}$ is a Coleman family passing through $\theta^{\kappa-1} f_{2-\kappa}$, with radius $M_{\mathcal{G}} \geq 0$, then the 
weight $l$ of $g$ can be chosen to be any integer satisfying the following two conditions:
\begin{enumerate}
 \item [(i)] $l > 2\alpha +2\kappa,$
 \item[{(ii)}] $l =(k-2+\kappa)p^{M}+\kappa +n(p-1)p^{M}$, for any $n\in \mathbb{Z}$.
 \end{enumerate}
\end{thm}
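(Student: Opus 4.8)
The plan is to combine the three main tools assembled in Section~\ref{Colemanfamilies}: the theta operator on overconvergent forms (\thmref{thetaover}), the existence of Coleman families through overconvergent eigenforms (\thmref{familyU}), and Coleman's control theorem (\thmref{classicity}). First I would take the $p$-stabilization $f_k \in S_k^\dagger(N)$ of $f$, which by hypothesis has slope $\alpha$ and lives in a Coleman family $\mathcal{F}$ over a ball $B_K[k, p^{-r}]$ with $r$ achieving (up to arbitrarily small loss) the radius in \defref{radius}, so that $M_{f_k} = \lceil r_{f_k}\rceil$. Next I would feed the ``negative-weight'' form into the machine: writing $k \equiv 2-\kappa \bmod p^{M_{f_k}+\delta(r_{f_k})}$, the form $\theta^{\kappa-1}$ applied to something of weight $2-\kappa$ lands in weight $\kappa$; more precisely, one first uses $\theta^{1-\kappa}$ informally — better, one observes that there is an overconvergent eigenform of weight $2-\kappa$ whose $\theta^{\kappa-1}$ is (a twist/stabilization of) a companion of $f_k$ modulo $p$, or one runs the family $\mathcal{F}$ itself: since $\mathcal{F}$ is defined on all integer weights in $B_K[k,p^{-r}]$, and $2-\kappa$ lies in this ball (that is exactly the congruence condition $k\equiv 2-\kappa \bmod p^{M_{f_k}+\delta(r_{f_k})}$, using the Proposition on congruences just before this section), I get an overconvergent eigenform $f_{2-\kappa}$ of weight $2-\kappa$, level $Np$, character $\chi\omega^{i-(2-\kappa)}$, slope $\alpha$, with $f_{2-\kappa} \equiv f_k \bmod p$.

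Then I would apply \thmref{thetaover}: $h := \theta^{\kappa-1} f_{2-\kappa}$ is an overconvergent eigenform of weight $\kappa$, level $Np$, same character, and it is $N$-new if $f$ is. On $q$-expansions $\theta^{\kappa-1}$ multiplies $a_n$ by $n^{\kappa-1}$, so the slope of $h$ is $\alpha + (\kappa-1)$ (the $U_p$-eigenvalue gets multiplied by $p^{\kappa-1}$), and $h \equiv \theta^{\kappa-1} f_k \equiv \theta^{\kappa-1} f \bmod p$ since $f_k \equiv f \bmod p$ and $\theta$ is defined on $q$-expansions. Now $h$ is overconvergent of weight $\kappa$ and slope $\alpha+\kappa-1 \ge \kappa-1$, so the control theorem does \emph{not} directly apply — this is the main obstacle, and it is resolved exactly as in Coleman's work on $\theta$ and companion forms: put $h$ into its own Coleman family $\mathcal{G}$ over some ball $B_K[\kappa, p^{-r'}]$, and move to a large weight $l$ in that ball. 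Choosing $l$ large enough that $l > \alpha + \kappa - 1 + 1 = \alpha+\kappa$ — in fact one wants room for the $N$-new/newform bookkeeping and for $l$ to be a genuine classical weight, hence the bound $l > 2\alpha+2\kappa$ in (i) — the specialization $g := h_l = \mathcal{G}_l$ is an overconvergent eigenform of weight $l$, slope $\alpha+\kappa-1$, level $Np$, character $\chi\omega^{i-l}$, which by \thmref{classicity} is \emph{classical}, and by the congruence property of Coleman families (the Proposition preceding this section) $g \equiv h \equiv \theta^{\kappa-1} f \bmod p$. For the congruence $g\equiv h \bmod p$ to hold one needs $l \equiv \kappa \bmod p^{M+\delta(\cdot)}$ for the appropriate radius exponent $M$ of $\mathcal{G}$; combined with the requirement $l \equiv \kappa \bmod (p-1)$ (same component of weight space, so the tame character is literally $\chi\omega^{i-\kappa}=\chi\omega^{i-l}$) and the relation between weights of $f$ and $\kappa$, this forces $l$ of the shape in (ii): $l = (k-2+\kappa)p^M + \kappa + n(p-1)p^M$ for $n\in\mathbb{Z}$, with the first term encoding how far $\kappa$ sits from $2-k$.

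The last point is that $g$, being classical of weight $l$ with first Fourier coefficient $1$ and the right level and character, comes from a classical newform $g^\circ$ of level dividing $Np$; since $g$ is $N$-new whenever $f$ is (theta preserves $N$-newness, families preserve it), and since $g \equiv \theta^{\kappa-1} f \bmod p$ forces the nebentypus and level outside $p$ to match those of $f$, one can take the associated newform to have level $N$, level $Np$, or level $N$ with a $p$-stabilization — tidying this is the content of the ``if $f$ is a newform, so is $g$'' clause, via the usual dictionary between $p$-stabilizations and newforms of level prime to $p$ and the fact that $g$ has finite slope. Finally, the mod-$p$ congruence $g \equiv \theta^{\kappa-1} f$ yields \eqref{maineqn}: on $q$-expansions $\theta^{\kappa-1}$ multiplies $a_n(f)$ by $n^{\kappa-1}$, which is precisely the effect on Frobenius traces of twisting $\bar{\rho}_f$ by $\omega^{\kappa-1}$, so $\bar\rho_g \simeq \bar\rho_f \otimes \omega^{\kappa-1}$ up to semisimplification, as claimed in \thmref{main1}. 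The hard part throughout is the second step — producing $h$ of weight $\kappa$ honestly as an overconvergent \emph{eigenform} in a family and controlling its slope and congruence class — since this is where one genuinely leaves the classical range and must lean on Coleman's overconvergent $\theta$ together with the radius/congruence analysis of Section~\ref{Colemanfamilies}.
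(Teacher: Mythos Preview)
Your overall strategy matches the paper's exactly: specialize the Coleman family through $f_k$ at weight $2-\kappa$ to get $f_{2-\kappa}$, apply $\theta^{\kappa-1}$ to obtain an overconvergent eigenform $g_\kappa$ of weight $\kappa$ and slope $\alpha+\kappa-1$ congruent to $\theta^{\kappa-1}f$ modulo $p$, put $g_\kappa$ into a second Coleman family $\mathcal{G}$, and specialize at a large classical weight $l$ via the control theorem.

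There is, however, a real gap in your last paragraph. The specialization $g_l$ of $\mathcal{G}$ is a classical eigenform of level $Np$, not $N$, while the theorem claims $g \in S_l(N,\chi)$ of slope $\alpha+\kappa-1$. Passing from $g_l$ to $g$ is not just ``tidying'' via ``the usual dictionary'': two things must be proved. First, $g_l$ is $p$-old rather than $p$-new; the paper uses that a $p$-new eigenform of level $Np$ and trivial $p$-nebentypus has $c_p^2 = \chi(p)p^{l-2}$ (Miyake), forcing slope $(l-2)/2$, i.e.\ $l = 2\alpha+2\kappa$, contradicting (i). Second, the underlying level-$N$ form $g$ has slope exactly $\alpha+\kappa-1$: a priori $v(b_p)$ could exceed the $U_p$-slope of the stabilization. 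The paper analyzes the roots $\alpha_p,\beta_p$ of $X^2 - b_pX + \chi(p)p^{l-1}$, uses $l > 2\alpha+2\kappa$ to rule out $v(\alpha_p)=v(\beta_p)$ and to force $g_l = g_{\alpha_p}$ with $v(\alpha_p) < v(\beta_p)$, whence $v(b_p) = v(\alpha_p) = \alpha+\kappa-1$ and $g \equiv g_l \bmod p$. You sense that (i) is ``room for the $N$-new/newform bookkeeping'', but the argument is absent, and it is precisely what fixes the bound in (i).

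One smaller slip: the congruence on $l$ modulo $p-1$ should be $l \equiv k-2+2\kappa \pmod{p-1}$, not $l \equiv \kappa \pmod{p-1}$. The weight-character of $g_\kappa$ is $\chi\omega^{k-2+\kappa}\langle\langle\cdot\rangle\rangle^\kappa$, so the type of $\mathcal{G}$ is $k-2+2\kappa$, and this condition is what makes the nebentypus of $g_l$ literally $\chi$ and, via CRT with $l \equiv \kappa \pmod{p^M}$, produces the explicit shape in (ii).
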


\begin{proof}
%By definition of $M_{f_k}$, the quantity $p^{-M_{f_k}}$ is an actual radius $p^{-r}$ of a Coleman family of slope $\alpha$ passing through $f_k$. 
%By \thmref{family}, any classical  eigenform of finite slope lives in a Coleman family of overconvergent eigenforms, therefore we get an overconvergent family 
It is well known that the $p$-stabilization $f_k$ satisfies $f_k \equiv f \mod p$, since the prime Fourier coefficient of both sides away from $p$ are equal,
and the $p$-th Fourier coefficients are either both units with the same reduction or of positive slope with reduction equal to zero.  
Let $\mathcal{F}$ be an overconvergent family of slope $\alpha$ as in Theorem~\ref{familyU} of radius $p^{-M_{f_k}}$ passing through $f_k$. 
Since $k \equiv 2-\kappa \mod p^{M_{f_k}}$, by \thmref{familyU}, we see that there exists an overconvergent eigenform $f_{2-\kappa}$ of tame level $N$, %weight ${2-\kappa}$, 
weight-character $\chi \omega^{k-2+\kappa} \langle\langle \cdot \rangle\rangle^{2-\kappa}$ and slope $\alpha$ in the family $\mathcal{F}$. 
Since $k \equiv 2-\kappa \mod p^{M_{f_k}+ \delta_{f_k}}$, where $\delta_{f_k}$ is as in \eqref{delta}, we have 
\begin{equation}
  \label{id}
  f \equiv f_k \equiv f_{2-\kappa} \mod p,
\end{equation} 
by \eqref{familycong}.
Now, consider the form $$g_\kappa=\theta^{\kappa-1} f_{2-\kappa}.$$ By \thmref{thetaover}, $g_{\kappa}$ is an overconvergent eigenform of tame level $N$ and weight-character $\chi \omega^{k-2+\kappa} \langle\langle \cdot \rangle\rangle^{\kappa}$.  Clearly $g_{\kappa}$ has slope $\alpha+\kappa-1$ because $f_{2-\kappa}$ has slope $\alpha$. 
%If $a_n$ and $d_n$ denote the $n$-th Fourier coefficients in the $q$-expansion of $f$ and $g_{\kappa}$ respectively, 
Applying $\theta^{\kappa-1}$  to \eqref{id}, we obtain 
%together with the definition of $g_{\kappa}$, we get
%$$
%d_n\equiv n^{\kappa-1} a_n \mod p,
%$$
%i.e.,
\begin{equation}\label{id2}
 g_{\kappa} \equiv \theta^{\kappa-1} f \mod p.
\end{equation}
%Again, $p^{-M_{g_\kappa}}$ is an actual radius of a Coleman family passing through $g_\kappa$. 
%To ease notation, set $M = M_{g_\kappa}$. 
Let  $\mathcal{G}$ be a family consisting of overconvergent eigenforms of slope $\alpha+\kappa-1$ and 
tame level $N$ such that $g_{\kappa}$ is a weight $\kappa$ specialization of the family $\mathcal{G}$ (cf. Theorem~\ref{familyU}).
Let $M_{g_\kappa}$ and $\delta_{g_\kappa}$ %= \lceil r_{g_\kappa} \rceil$
 be as in Definition~\ref{radius} and \eqref{delta} for the form $g_\kappa$, and set 
\begin{eqnarray}
  \label{M}
  M & := & M_{g_\kappa} +  \delta_{g_\kappa}.
\end{eqnarray}
Choose a weight $l \equiv \kappa \mod p^{M}$ and assume that 
\begin{equation}\label{cond}
\begin{aligned}
&{(i)} ~~~l > 2\alpha +2\kappa,\hspace{8cm}\\
&{(ii)}~~ l \equiv k-2+2\kappa \mod  (p-1),
%& {(iii)} ~l\equiv \kappa \mod p^M. % p^{M_{\mathcal{G}}}
\end{aligned}
\end{equation}
and let $g_l$ be an eigenform of weight $l$ in the family $\mathcal{G}$.
Note that the weight-character of $g_l$ is $\chi \omega^{k-2+2\kappa-l} \langle\langle \cdot \rangle\rangle^{l}$ which is equal to $\chi \langle\langle \cdot \rangle\rangle^{l}$ because of the second assumption above. Furthermore, in view of the first assumption, by the control theorem (\thmref{classicity}), we see that $g_l$ is a classical eigenform of weight $l$, tame level $N$ and character $\chi$. 
%Also, because of condition $(iii)$ on $l$, 
By \eqref{familycong} and \eqref{id2}, we get
\begin{equation}\label{id3}
g_l \equiv g_{\kappa} \equiv \theta^{\kappa-1} f \mod p.
\end{equation}
By the Chinese remainder theorem, any simultaneous solution $l$ of the congruence $(ii)$ in \eqref{cond} and the congruence 
$l \equiv \kappa \mod p^M$ %p^{M_{\mathcal{G}}}$ 
will be of the form given in the statement of the theorem.

We claim that $g_l$ is the $p$-stabilization of a form $g$ of slope $\alpha + \kappa -1$. 
First note that the eigenform $g_l$ is $p$-old. Indeed, if $g_l$ is a $p$-new, by \cite[Theorem 4.6.17(ii)]{miyake}, we get 
\begin{equation*}\label{id4}
c_p^2=\chi(p)p^{l-2},
\end{equation*}
where $c_p$ denotes the $p$-th Fourier coefficient of $g_l$. It follows that the slope of $g_l$ is $(l-2)/2$, i.e.,
$$
l=2\alpha +2\kappa,
$$
which contradicts the first assumption on $l$ in \eqref{cond}. Therefore, the form $g_l$ is $p$-old and so is a $p$-stabilization of an eigenform  
$g(z)= \sum_{n= 1}^\infty b_nq^n \in S_l(N,\chi)$. We now show that $g$ has slope $\alpha+\kappa-1$.
We know that $g_l$ is either $g_{\alpha_p}$ or $g_{\beta_p}$, where
$$g_{\alpha_p}(z):=g(z)-\beta_pg(pz) ~~{\rm and~}~ g_{\beta_p}(z):= g(z)-\alpha_pg(pz)$$
and $\alpha_p$, $\beta_p$ are their $U_p$ eigenvalues given by the roots of $X^2-b_pX+\chi(p)p^{l-1}$. Clearly,
\begin{align}\label{sum}
 \alpha_p+\beta_p=b_p,
  \end{align}
    \begin{equation}\label{ab}
  \alpha_p\beta_p=\chi(p)p^{l-1}~ \Longrightarrow~  v(\alpha_p)+v(\beta_p)=l-1.
  \end{equation}
Since $c_p$ is either $\alpha_p$ or $\beta_p$ and $v(c_p)=\alpha+\kappa-1$, by using \eqref{ab} we conclude that
$$
v(\alpha_p)\ne v(\beta_p),
$$
otherwise we would get $(l-1)/2 =\alpha+\kappa-1$, which is not possible because of condition $(i)$ in \eqref{cond}. Therefore, there is no loss of generality in assuming 
that $v(\alpha_p)< v(\beta_p)$ and hence, by \eqref{ab}, we get
\begin{equation*}\label{ineq}
 v(\alpha_p)<\frac{l-1}{2} {\rm ~~and~~}v(\beta_p)>\frac{l-1}{2}.
\end{equation*}
We now claim that $g_l= g_{\alpha_p}$. If not, $g_l= g_{\beta_p}$, hence their $U_p$ eigenvalues satisfy $c_p=\beta_p$, which gives
$$
\alpha+\kappa-1=v(\beta_p)>\frac{l-1}{2}, % \Longrightarrow 2\alpha+2\kappa-1 >l,
$$
which again contradicts condition $(i)$ in \eqref{cond}. We thus get $g_l(z)=g_{\alpha_p}(z)=g(z)-\beta_pg(pz)$ which gives $c_p=\alpha_p$. Since $v(\alpha_p)<v(\beta_p)$, by using \eqref{sum}, we conclude that $v(b_p)=v(\alpha_p)=\alpha+\kappa-1$, proving that $g$ has slope $\alpha+\kappa-1$. 
Since $\beta_p \equiv 0 \mod p$, we have $g \equiv g_l \mod p$. 
If follows from this and \eqref{id3}, that $g \equiv \theta^{\kappa-1} f \mod p$, which completes the proof when $f$ is an eigenform. If $f$ is $N$-new, then
the forms $f_k$, $f_{2-\kappa}$, $g_\kappa$, $g_l$ and $g$ above are all $N$-new, completing the proof of the theorem. 
\end{proof}

As remarked in the Introduction, the theorem above implies Theorem~\ref{main1}, and so also Corollary~\ref{main}.

\section{Compatibility with Reductions of Galois representations}\label{comp} 
Fix a prime $p\ge 5$ and a positive integer $N$ such that $(p,N)=1$. 
Let $f=\sum_{n = 1}^\infty a_nq^n\in S_k(N)$ be a classical normalized  eigenform (with character $\chi = 1$) of slope $\alpha$,
having a $p$-stabilization  $f_k$ of slope $\alpha$. Assume also that $k \equiv 0 \mod p^{M_{f_k} + \delta_{f_k}}$ if $\alpha > 0$.
%Since $M_{f_k}$ in Definition~\ref{radius} is bounded, by \cite{wan}, with the bound depending only on 
%$p$, $N$ and $\alpha$, 
%By assuming that $k$ is large, we may assume 
%the hypotheses of \corref{main} are satisfied, namely  $k \equiv 0 \mod p^{M_{f_k} + \delta(r_{f_k})}$ and 
%that $\alpha < \frac{k-1}{2}$, so that a $p$-stabilization $f_k$ of $f$ of slope $\alpha$ exists. 
Let $g=\sum_{n = 1}^\infty b_nq^n \in S_l(N)$ be a normalized 
eigenform (with character $\chi = 1$) of slope $\alpha +1$ produced by \corref{main}, so 
\begin{eqnarray}
  \label{f vs g}
  \bar{\rho}_g \simeq \bar{\rho}_f \otimes \omega.
\end{eqnarray} 
For small slopes $\alpha+1$, the shape of the local Galois representation $\bar{\rho}_g|_{G_p}$ can be obtained in two ways: one by using \eqref{f vs g}, thereby reducing the problem 
to determining the shape of $\bar{\rho}_f|_{G_p}$ which is a form of smaller slope $\alpha$, and the other directly. 
Since the reductions $\bar{\rho}_f|_{G_p}$ are known %(at least if $a_p^2 \neq 4p^{k-1}$) 
for all slopes smaller than $2$,
we can compare these two methods to compute $\bar{\rho}_g|_{G_p}$, when $\alpha + 1 \in [1,2)$.  We will see that 
the computation of $\bar{\rho}_g|_{G_p}$ using these two methods is compatible in all cases (Sec. \ref{slopes in [0,1)}). 
When $\alpha + 1 \ge 2$,  we can also
use \eqref{f vs g} to produce new examples of reductions $\bar{\rho}_g|_{G_p}$ which have not, as far as we know, been shown to exist. We do this
in Sec. \ref{extrap} when  $\alpha + 1 \in [2,3)$.  Finally, in Sec. \ref{zigzag}, we recall that \eqref{f vs g} is compatible with the zig-zag conjecture of \cite{gha19}.

\subsection{Compatibility for $\alpha \in [0,1)$.} 
\label{slopes in [0,1)}

We divide our discussion into three cases.
Sec.~\ref{slope 0} treats the case $\alpha=0$, Sec.~\ref{slopes in (0,1)} the case $\alpha \in (0,1)$, excluding weights 
$k \nequiv 3 \mod (p-1)$ if $\alpha= \frac{1}{2}$, and finally Sec.~\ref{slope 1/2} the exceptional case
$\alpha= \frac{1}{2}$ and $k\equiv 3 \mod (p-1)$. Each section contains a table, whose columns we describe now.
In the first column, we write down the structure of the reductions of the local Galois representations attached to $f$ on the inertia subgroup $I_p$ 
using an appropriate reference.  Using \corref{main}, or more precisely \eqref{f vs g}, we immediately obtain the shape of the local Galois representation attached to $g$ on $I_p$. It is stated 
in the second column. Clearly, the slope $\alpha+1$ of $g$ lies in the interval $[1,2)$. \corref{main} shows that the weight $l$ of $g$ is of the form 
$$l = kp^M+2+n(p-1)p^M,$$ for 
any $n\in \mathbb{Z}$. 
%(for simplicity, we write $M$ instead of $M_{\mathcal{G}}$). 
This information is enough to compute the reduction $\bar{\rho}_g|_{I_p}$ directly, using the recent work of the first author and his collaborators. 
It is listed in the third column. In spite of the rather complicated behavior of the representations involved in the tables, the
representations listed in columns 2 and 3 match in all cases.

\subsubsection{Compatibility for $\alpha=0$.} 
  \label{slope 0}

In an unpublished letter to Serre \cite{del}, Deligne obtained the shape of $\bar{\rho}_f|_{I_p}$ when $v(a_p)=0$. It is stated in the first column of Table 1. By  
\eqref{f vs g}, we obtain the structure of 
$\bar{\rho}_g|_{I_p}$. This is listed in the second column. In the third column, we use \cite{bgr18} to directly compute the shape of $\bar{\rho}_g|_{I_p}$.
To do this we need some notation. Set $r = l - 2 = kp^M+np^M(p-1)$ and suppose that $r \equiv b \mod (p-1)$ for the set of 
representatives $2 \le b \le p$ modulo $(p-1)$.  Also, if $b = 2$, set 
\begin{eqnarray}
  \label{tau t slope 1}
  t' =v(l-4) & \text{and} & \tau' = v \left( \frac{b_p^2 - {l-2 \choose 2}p^2}{pb_p} \right) \ge  0.
\end{eqnarray}
Using \cite{bgr18}, we obtain column 3 of Table 1.
\vspace{.3cm}

\begin{tabular}{ |p{4.3cm}||p{2.6cm}|p{4.7cm}|  }
 \hline
Deligne \cite{del} & \corref{main} &Bh-Gh-Ro \cite{bgr18}\\
 \hline
$f\in S_k(N)$ with $v(a_p)=0$  &\multicolumn{2}{|c|}{$g\in S_l(N)$ with $v(b_p)=1$ \qquad \qquad \qquad \qquad}\\
%$v(a_p)=0$ & \multicolumn{2}{|c|}{$\bar{\rho}_g=\bar{\rho}_f \otimes \omega~~~~ $ 
%and $~~~~l=kp^M+2+np^M(p-1)~~~~~~~~$}\\
 \hline
$~$ \newline \newline \newline \newline 
$\bar{\rho}_f|_{I_p}  \simeq\omega^{k-1} \oplus 1.$  
& $~$ \newline \newline \newline \newline   $\bar{\rho}_g|_{I_p}\simeq \omega^{k} \oplus \omega $. & 
% Put $r=l-2$; $b \equiv r \equiv k \mod (p-1)$ \newline 
%with $2 \le b \le p$. \newline  \newline 
{{\bf Case (i)}:} $b=2$. \newline
$M\ge 1 \implies \tau' = 0 = t'$. \newline % v\left( \frac{b_p}{p}- {r \choose 2}\frac{p}{b_p} \right)=0=v(r-2).$ \newline 
$\Longrightarrow \bar{\rho}_g|_{I_p} \simeq \omega^b \oplus \omega $. \newline \newline 
{{\bf Case (ii)}:} %$k \equiv 3, \dots, p-1  \mod (p-1)$. \newline 
$b=3, \dots, p-1$. \newline
 $M\ge 1 \implies p\nmid r-b$.  \newline
$\Longrightarrow \bar{\rho}_g|_{I_p} \simeq \omega^b \oplus \omega $. \newline \newline
{{\bf Case (iii)}:} $b=p$. \newline $M\ge 1 \implies p\mid r-b$.  \newline
$\Longrightarrow \bar{\rho}_g|_{I_p} \simeq \omega \oplus \omega $. \\ 
 \hline 
\end{tabular}
\\
\begin{center}
{   Table 1. ($\alpha=0$)} 
\end{center}
Since $b \equiv k$ mod $(p-1)$, the reductions $\bar{\rho}_g|_{I_p}$  listed in columns 2 and 3 of Table 1 match.

\subsubsection{Compatibility for $\alpha\in (0,1)$ with $k\nequiv 3 \mod (p-1)$ if $\alpha= \frac{1}{2}$.} 
  \label{slopes in (0,1)}

In Table 2 we compare the reductions obtained in \cite{buzgee1} for slopes in $(0,1)$ and \cite{bg15} for
slopes in $(1,2)$. These papers do not treat completely  the difficult cases of exceptional weights $k \equiv 3 \mod (p-1)$ if $\alpha = \frac{1}{2}$ and 
$l \equiv 5 \mod (p-1)$ if $\alpha = \frac{3}{2}$, respectively, but we shall deal with them in the following section. To use \cite{buzgee1}, let 
$t-1$ be the residue class of $k-2$ modulo $(p-1)$, for $1\le t \le p-1$. % such $t-1 \equiv k-2 \mod (p-1)$. 
To use \cite{bg15}, we let $r = l-2$ and $b$ be as in Table 1. We obtain Table 2. 
\vspace{.3cm}

\begin{tabular}{ |p{5.2cm}||p{3.8cm}|p{4.4cm}|  }
 \hline
Buzzard-Gee \cite{buzgee1} & \corref{main} & Bh-Gh \cite{bg15}\\
 \hline
$f\!\in\! S_k(N)$ with $v(a_p)\in (0,1)$,  &\multicolumn{2}{|c|}{$g\in S_l(N)$ with $v(b_p) \in (1,2)$,}\\
 $v(a_p)\neq \frac{1}{2}$ if $k\equiv 3 \mod (p-1)$ & \multicolumn{2}{|c|}{$v(b_p)\neq \frac{3}{2}$ if $l \equiv 5 \mod (p-1)$} \\
%{$\bar{\rho}_g=\bar{\rho}_f \otimes \omega~~~~ $ and $~~~~l=kp^M+2+np^M(p-1)~~~~~~~~$}\\
 \hline
$~$ \newline \newline \newline \newline \newline
$\bar{\rho}_f|_{G_p} \simeq {\rm ind}\left(\omega_2^{t}\right).$
&$~$ \newline \newline  \newline \newline \newline   
$\bar{\rho}_g|_{G_p} \simeq {\rm ind} \left( \omega_2^{t+p+1} \right).$
& {{\bf Case (i)}:} $b=2$. \newline 
$M\ge 1 \implies p\mid r(r-1).$ \newline 
$\Longrightarrow \bar{\rho}_g|_{G_p} \simeq {\rm ind}\left(\omega_2^{b+p}\right)$. \newline \newline 
{{\bf Case (ii)}:} $b= 3, \dots, p-1$. \newline 
$M\ge 1 \implies p\nmid r-b$.  \newline
$\Longrightarrow \bar{\rho}_g|_{G_p} \simeq {\rm ind}\left(\omega_2^{b+p}\right) $. \newline \newline
{{\bf Case (iii)}:} $b=p$. \newline 
$M\ge 2 \implies p^2 \nmid r-b$.  \newline
$\Longrightarrow \bar{\rho}_g|_{G_p} \simeq {\rm ind}\left(\omega_2^{2p}\right) $. \\ 
\hline 
 \end{tabular}
\\
 \begin{center}
{   Table 2. ($\alpha\in (0,1)$ with $k\nequiv 3 \mod (p-1)$ if $\alpha= \frac{1}{2}$) } 
\end{center}
Note $l \equiv k+2 \mod (p-1)$ implies $b = t+1$, so the shape of $\bar{\rho}_g|_{G_p}$ in columns 2 and 3 of Table 2 match.

\subsubsection{Compatibility for $\alpha= \frac{1}{2}$ and $k\equiv 3 \mod (p-1)$. } 
  \label{slope 1/2}

In Table 3, we compare the results of \cite{buzgee2} which treats the exceptional weights $k\equiv 3 \mod (p-1)$ if the slope is $\frac{1}{2}$ and 
the recent work \cite{gr19} which treats the exceptional weights $l \equiv 5 \mod (p-1)$ when the slope is $\frac{3}{2}$. 
In order to use these works we introduce the following notation. Set 
\begin{eqnarray}
  t= v(k-3) & \text{and} & \tau = v \left( \frac{a_p^2-(k-2)p}{pa_p} \right) \ge -\frac{1}{2}, \label{tau t slope 1/2}\\ 
  t'=v(l-5) & \text{and} & \tau'= v\left( \frac{b_p^2-(l-4) {l-3 \choose 2}p^3}{pb_p} \right) \ge  \frac{1}{2}. \label{tau t slope 3/2}
\end{eqnarray}
As above, we obtain Table 3. 
\vspace{.3cm}

 \begin{tabular}{ |p{4.3cm}||p{4.0cm}|p{4.7cm}|  }
 \hline
Buzzard-Gee \cite{buzgee2}& \corref{main} &Gh-Ra \cite{gr19}\\
 \hline
$f\in S_k(N)$ with $v(a_p)\!=\!\frac{1}{2}$ and $k\equiv 3 \mod (p-1)$   &\multicolumn{2}{|c|}{$g\in S_l(N)$ with $v(b_p)=\frac{3}{2}$ and $l\equiv 5 \mod (p-1)$} \\% such that$~~~~~~~~~~~~$}\\
\hline 
$~$ \newline 
$
\bar{\rho}_f|_{I_p} \simeq \newline \newline \begin{cases}
              \omega_2^{2} \oplus \omega_2^{2p}, &  \tau< t,\\
              \omega \oplus \omega, & \tau \ge t.
                            \end{cases}$
\newline
&$~$ \newline    
$\bar{\rho}_g|_{I_p} \simeq \newline \newline \begin{cases}
              \omega_2^{p+3} \oplus \omega_2^{3p+1}, &  \tau< t,\\
              \omega^2 \oplus \omega^2, & \tau \ge t.
                            \end{cases}$ \newline
&%$~$ \newline 
$M \ge 1 \implies t' = 0$. \newline
$\Longrightarrow
\bar{\rho}_g|_{I_p} \simeq  \newline \newline \begin{cases}
              \omega_2^{p+3} \oplus \omega_2^{3p+1}, & \frac{1}{2} \le \tau'< 1,\\
              \omega^2 \oplus \omega^2, & \tau' \ge 1.
                            \end{cases}$ \newline \\
 \hline
  \end{tabular}
\\
 \begin{center}
{   Table 3. ($\alpha= \frac{1}{2}$ and $k\equiv 3 \mod (p-1)$)} 
\end{center}
Again, we see that the shapes of  $\bar{\rho}_g|_{I_p}$ in columns 2 and 3 of Table 3 are compatible. In fact, Table 3 shows that if $\tau < t$  (respectively, $\tau \ge t$), then we must have
$\tau' < 1$ (respectively, $\tau' \ge 1$).

\subsection{Extrapolation of the shape of $\bar{\rho}_g|_{G_p}$ }\label{extrap}
Let $f$ be an eigenform of slope $\alpha \in [1,2)$ as in Corollary~\ref{main}. 
%Using \corref{main}, we observe that if $\bar{\rho}_f|_{G_p}$ is reducible (resp. irreducible), then there exists an eigenform $g$ of slope $\alpha +1\in [2,3)$ such that $\bar{\rho}_g|_{G_p}$ is also reducible (resp. irreducible).
Using the results of \cite{bgr18}, \cite{bg15} and \cite{gr19} and \eqref{f vs g}, we see that there is an eigenform $g  = \sum_{n=1}^\infty b_n q^n$ of weight $l$, level coprime to $p$, 
and slope $v(b_p) = \alpha + 1 \in [2,3)$ such that $\bar{\rho}_g|_{I_p}$ has one of the following structures, although of course there may be other structures (e.g., those not coming
from the theta operator). 
\vspace{.3cm}

{\noindent \bf Case (i)} $v(b_p)=2$.
$$
\bar{\rho}_g|_{I_p} \simeq \begin{cases}
              {\rm ind}\left(\omega_2^{b+p+2}\right), & b=2,3,\dots, p-1,\\
               \omega^{b+1} \oplus \omega^2, &  b=2,3,\dots, p,\\
                {\rm ind}\left(\omega_2^{b+2p+1}\right), & b=2,p,\\
               %  \omega^{b+1} \oplus \omega^2 &  b=p,\\
                 \end{cases} $$
where $2\le b \le p$ is an integer such that $b \equiv l-4 \mod (p-1)$. 
\vspace{.3cm}

{\noindent \bf Case (ii)} $v(b_p) \in (2,3)$ and $l \nequiv 7 \mod (p-1)$ if $v(b_p) = \frac{5}{2}$.
$$
\bar{\rho}_g|_{I_p} \simeq \begin{cases}
              {\rm ind}\left(\omega_2^{b+p+2}\right), & b=2,3,\dots, p-1,\\
              {\rm ind}\left(\omega_2^{b+2p+1}\right), & b=2,3\dots,p,\\
              \omega^{2} \oplus \omega^2, &  b=p,\\
                 \end{cases} $$
where  $2\le {b}\le p$ is an integer such that ${b}\equiv l-4 \mod (p-1)$. 
\vspace{.3cm}

{\noindent \bf Case (iii)} $v(b_p) = \frac{5}{2}$ if $l \equiv 7 \mod (p-1)$.
$$
\bar{\rho}_g|_{I_p} \simeq \begin{cases}
              {\rm ind}\left(\omega_2^{p+5}\right), \\
               \omega^{4} \oplus \omega^2, \\
                {\rm ind}\left(\omega_2^{2p+4}\right), \\
                 \omega^{3} \oplus \omega^3.
                 \end{cases} $$

\subsection{Zig-zag conjecture}\label{zigzag}
Let $f\in S_k(N)$ be a normalized eigenform of half-integral slope $\alpha$ such that $0<\alpha \le \frac{p-1}{2}$. Then $f$ is said to have exceptional weight $k$ 
for slope $\alpha$ if $k\equiv 2\alpha+2\mod (p-1)$. If $b= 2\alpha$, then the first author conjectured that in the general exceptional case, there are $b+1$ possibilities for 
the reduction $\bar{\rho}_f|_{I_p}$ with various irreducible and reducible cases occurring alternately (in fact, the conjecture is for general crystalline representations). The precise 
version is outlined in a conjecture called the zig-zag conjecture \cite[Conjecture 1.1]{gha19}. It is known that the zig-zag conjecture holds for exceptional weights corresponding 
to slopes $\frac{1}{2}$, $1$ and $\frac{3}{2}$ (by  \cite{buzgee2, bgr18, gr19}, respectively). Tables 1 and 3 show that these known cases of the
zig-zag conjecture are compatible with the theta operator (more precisely, Corollary~\ref{main}). In \cite[Sec. 4.2]{gha19}, the first author showed 
that the general zig-zag conjecture is compatible with the theta operator.
%in the sense that the independent predictions made for the representations 
%corresponding to the two forms on either sides of \eqref{f vs g} match, 
%after twisting by $\omega$. 
See \cite{gha19} for further details.

\section{Upper bounds for the radii of Coleman families}\label{lb}

Let $f \in S_k(N)$ and $g \in S_l(N)$ be two normalized eigenforms as in \corref{main} (with trivial character $\chi = 1$)  and 
slopes $\alpha$ and $\alpha +1$, respectively.  Recall that $p^{-M_{g_2}}$ is a radius for a Coleman 
family passing through $g_2 = \theta f_0$ (see the proof of \thmref{main22}), and that the integer
$M = M_{g_2} + \delta_{g_2}$ defined in \eqref{M} appears in the formula for the weight $l$ of $g$, namely $l = k p^M + 2 +n(p-1)p^M$, for
any $n \in \mathbb{Z}$. Assume that $M_{g_2} \neq r_{g_2}$, i.e., $\delta_{g_2} = 0$, 
so that $$M = M_{g_2}.$$ In this section, we obtain lower bounds for $M_{g_2}$ (so upper bounds for the radii of the Coleman 
family passing through $g_2$)
%The aim of this section is to obtain lower bounds for $M$ 
when $\alpha \in [0,1)$ using the compatibility 
results in Secs. \ref{slope 0},  \ref{slopes in (0,1)} and \ref{slope 1/2}, regarding
the reductions of Galois representations of slopes $\alpha$ and $\alpha + 1$. 
\vspace{.2cm}

{\noindent \underline {Case (1)}:} $\alpha=0$. In this case $g = \sum_{n=1}^\infty b_n q^n$ has slope 1. 
Let $\tau'$ and $t'$ be the parameters defined in \eqref{tau t slope 1} when $l \equiv 4 \mod (p-1)$. % We have the following. 
\begin{prop}
\label{0}
Assume $g$ has slope $1$. If $l \nequiv 4 \mod (p-1)$, then $M \ge 1$. If $l \equiv 4 \mod (p-1)$, then $\tau' = t'$.
%but we obtain no information about $M$. 
\end{prop}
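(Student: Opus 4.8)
\emph{Proof proposal.} The plan is to compare the two descriptions of $\bar\rho_g|_{I_p}$ recorded in Table~1 and to exploit the freedom in the weight $l$ of $g$ supplied by \thmref{main22}. On one side, Deligne's description \cite{del} of slope-$0$ reductions gives $\bar\rho_f|_{I_p}\simeq\omega^{k-1}\oplus 1$, so by \corref{main} (which here is the case $\kappa=2$, i.e. $\bar\rho_g\simeq\bar\rho_f\otimes\omega$) we have $\bar\rho_g|_{I_p}\simeq\omega^{k}\oplus\omega$, a \emph{reducible} representation. On the other side, \cite{bgr18} computes $\bar\rho_g|_{I_p}$ directly from $l$: writing $r=l-2$ and letting $b\in\{2,\dots,p\}$ be the residue of $r$ (equivalently of $k$) modulo $p-1$, so that $b=2\iff l\equiv 4\bmod(p-1)$, the agreement in Table~1 rests on a divisibility statement, namely $p\nmid r-b$ if $3\le b\le p-1$, $p\mid r-b$ if $b=p$, and $\tau'=t'=0$ if $b=2$.

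The input from \thmref{main22} (with $\alpha=0$, $\kappa=2$) is that for \emph{each} $n\in\mathbb Z$ with $l:=kp^{M}+2+n(p-1)p^{M}>4$ there is such a $g$ of weight $l$, where $M=M_{g_2}$. Writing $k=b+m(p-1)$, one computes
\[
r-b \;=\; b\,(p^{M}-1)+(m+n)(p-1)p^{M}\;=\;(p-1)\bigl[\,b\,(1+p+\dots+p^{M-1})+(m+n)\,p^{M}\,\bigr].
\]
If $M\ge 1$ the bracket is $\equiv b\pmod p$, so $v(r-b)=0$ for $b<p$ and $v(r-b)\ge 1$ for $b=p$, recovering the divisibilities that make Table~1 self-consistent; the key point is that if $M=0$ the bracket is simply $m+n$, whose $p$-adic valuation can be prescribed at will through the choice of $n$, only finitely many $n$ being excluded by $l>4$.

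I then split on $b$. Suppose $l\nequiv 4\bmod(p-1)$, i.e. $b\ne 2$, and suppose for contradiction that $M=0$. Choose $n$ with $l>4$ so that $p\mid m+n$ when $3\le b\le p-1$, resp. $p\nmid m+n$ when $b=p$; by the displayed identity this forces $p\mid r-b$, resp. $p\nmid r-b$, the divisibility opposite to the one in Table~1. Then the explicit slope-$1$ results of \cite{bgr18} show that $\bar\rho_g|_{I_p}$ is not isomorphic to $\omega^{b}\oplus\omega$ (it becomes irreducible), contradicting $\bar\rho_g|_{I_p}\simeq\omega^{k}\oplus\omega$; hence $M\ge 1$. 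Suppose now $l\equiv 4\bmod(p-1)$, i.e. $b=2$. Then \corref{main} forces $\bar\rho_g|_{I_p}\simeq\omega^{2}\oplus\omega$, while the classification of \cite{bgr18} for this exceptional weight gives $\bar\rho_g|_{I_p}\simeq\omega^{2}\oplus\omega$ exactly when $\tau'=t'$; hence $\tau'=t'$.

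The main obstacle is the bookkeeping in the case $b=2$: one must match the reducible value $\omega^{2}\oplus\omega$ against the full zig-zag-type list of slope-$1$ exceptional reductions in \cite{bgr18} and confirm that it is attained \emph{only} in the regime $\tau'=t'$. The cases $b\ne 2$ are comparatively soft, needing just the boundary behaviour of slope-$1$ reductions, the counting identity above, and the weight flexibility in \thmref{main22}.
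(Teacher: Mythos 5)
Your proposal is correct and follows essentially the same route as the paper: compare the twist $\bar\rho_g|_{I_p}\simeq\omega^{k}\oplus\omega$ from \corref{main} with the direct computation of \cite{bgr18}, and for $b\ne 2$ use the freedom in $n$ (i.e.\ in the weight $l$) to force, if $M=0$, the divisibility of $r-b$ opposite to that of Table~1, yielding an irreducible reduction and a contradiction, while for $b=2$ the $\tau'\ne t'$ cases of \cite{bgr18} are irreducible and hence excluded. The only differences are cosmetic bookkeeping (your parametrization $k=b+m(p-1)$ and the factorization of $r-b$ versus the paper's direct choice $n\equiv k-b$ or $n\nequiv k \bmod p$), plus a harmless slip in saying only finitely many $n$ are excluded by $l>4$ — what matters, and what holds, is that every residue class of $n$ modulo $p$ contains admissible weights.
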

\begin{proof}
 Recall $r= l-2 =  kp^M+np^M(p-1)$, and $b\equiv r  \equiv k \mod (p-1)$ where $2 \le b \le p$
 is the representative of $r$ modulo $(p-1)$. 
 First assume $b = 3, \dots p-1$, which is {\bf Case (ii)} of Table 1. 
 Suppose $M=0$. Choosing $n \equiv k-b \mod p$, we see that $r - b \equiv 0 \mod p$, 
 %so $p\mid r-b$, and 
 so the main theorem of  
 \cite{bgr18} shows that 
 $\bar{\rho}_g|_{G_p} \simeq {\rm ind} (\omega_2^{b+1} )$ is irreducible, whereas
 %\corref{main} 
 the middle column of Table 1 (which comes from Corollary~\ref{main}) yields that 
 $\bar{\rho}_g|_{I_p} \simeq \omega^k \oplus \omega$. This is a contradiction. Hence $M \ge
1$. Now assume $b = p$, which is {\bf Case (iii)} of Table 1. If $M = 0$, choosing $n \nequiv k \mod p$, we have $r - b \equiv k-n \nequiv 0 \mod p$, 
 %so $p \nmid r-b$, 
so by \cite{bgr18},  $\bar{\rho}_g|_{G_p} \simeq {\rm ind} (\omega_2^{b+p} )$ is irreducible,
another contradiction. Finally, assume $b = 2$. If $\tau' \neq t'$, \cite{bgr18} shows that  
$\bar{\rho}_g|_{G_p} \simeq  {\rm ind} (\omega_2^{b+1})$ or $ {\rm ind} (\omega_2^{b+p})$, both of which are irreducible, giving a contradiction. 
\end{proof}

{\noindent \underline {Case (2)}:} $\alpha\in (0,1)$ and $\alpha \neq \frac{1}{2}$ if 
$k\equiv 3 \mod (p-1)$. % In this case we have the following. 

\begin{prop}\label{12}
If $g$ has slope in $(1,2)$ and $l \nequiv 5 \mod (p-1)$ if the slope is $\frac{3}{2}$, then
$$
M \> \ge \>
\begin{cases}
 2, &  \text{if } l \equiv 3 \mod (p-1),\\
 1, &  \text{otherwise}.
\end{cases}
$$
\end{prop}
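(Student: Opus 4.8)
The plan is to reverse the compatibility argument behind Table~2, exactly as in the proof of \propref{0}. By \thmref{main22} with $\kappa=2$, for the given $f$ the weight of $g$ may be taken to be \emph{any} integer $l=kp^{M}+2+n(p-1)p^{M}$ with $l>2\alpha+4$, where $M=M_{g_2}$ is the invariant of $g_2=\theta f_0$ fixed in this section. Thus it suffices to show that, were $M$ smaller than claimed, one could choose $n$ so that the reduction of the resulting $g$, computed directly from \cite{bg15}, contradicts the shape forced by \eqref{f vs g}.

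Set $r=l-2=p^{M}(k+n(p-1))$ and let $2\le b\le p$ be the representative of $r$ modulo $p-1$; as recorded after Table~2, $b=t+1$ with $1\le t\le p-1$ and $t-1\equiv k-2\pmod{p-1}$, so $l\equiv3\pmod{p-1}$ is equivalent to $b=p$. Since $\alpha\in(0,1)$ and the exceptional weight $k\equiv3\pmod{p-1}$ at slope $\tfrac12$ is excluded by hypothesis, Buzzard--Gee \cite{buzgee1} give $\bar{\rho}_f|_{G_p}\simeq{\rm ind}(\omega_2^{t})$, whence by \eqref{f vs g}
\[
\bar{\rho}_g|_{G_p}\ \simeq\ \bar{\rho}_f|_{G_p}\otimes\omega\ \simeq\ {\rm ind}\!\left(\omega_2^{\,t+p+1}\right),
\]
which is irreducible.

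For $b\le p-1$ (i.e.\ $l\not\equiv3\pmod{p-1}$) I would assume $M=0$, so that $r\equiv k-n\pmod p$, and choose $n$ large (to keep $l>2\alpha+4$) with $p\mid r-b$ when $3\le b\le p-1$, or with $p\nmid r(r-1)$ when $b=2$. Invoking the full classification of \cite{bg15}, in each of these degenerate sub-cases $\bar{\rho}_g|_{G_p}$ acquires a shape different from ${\rm ind}(\omega_2^{b+p})={\rm ind}(\omega_2^{t+p+1})$ --- reducible in the extreme sub-case, where one uses that the representation forced by \eqref{f vs g} is now irreducible (a point of departure from the slope-$1$ situation of \propref{0}). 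This contradiction forces $M\ge1$; the $b=2$ case is run with the parameters $t',\tau'$ of \eqref{tau t slope 1}, as in \propref{0}.

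The main obstacle is the case $b=p$ (i.e.\ $l\equiv3\pmod{p-1}$): here the generic output ${\rm ind}(\omega_2^{2p})$ of \cite{bg15} already coincides with ${\rm ind}(\omega_2^{t+p+1})$ --- which is precisely why Table~2 records a clash only once $p^{2}\nmid r-b$ fails --- so $p\mid r-b$ alone gives no contradiction and one must go one further $p$-power deep. Tracking $r-b$ modulo $p^{2}$: when $M=0$, writing $n=k+pm$ gives $r-b\equiv p(k-1-m)\pmod{p^{2}}$; when $M=1$ one gets $r-b\equiv p(k-1-n)\pmod{p^{2}}$. In either case a suitable residue of $n$ (with $n$ large) forces $p^{2}\mid r-b$, placing \cite{bg15} in the sub-case whose reduction differs from ${\rm ind}(\omega_2^{2p})$, again contradicting \eqref{f vs g}. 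This rules out $M\in\{0,1\}$, giving $M\ge2$. Finally, as $l\equiv k+2\pmod{p-1}$ independently of $n$, landing in the excluded class $l\equiv5\pmod{p-1}$ at slope $\tfrac32$ would force $k\equiv3\pmod{p-1}$ and $\alpha=\tfrac12$, which the hypothesis forbids; so \cite{bg15}, and not \cite{gr19}, governs throughout.
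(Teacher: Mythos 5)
Your proposal is correct and follows essentially the same route as the paper's proof: you exploit the freedom in $n$ from \thmref{main22} to force $p \nmid r(r-1)$ or $p \mid r-b$ when $M=0$ and $b \le p-1$, and to force $p^2 \mid r-b$ when $M \in \{0,1\}$ and $b=p$ (your mod $p^2$ computations agree with the paper's choices, e.g.\ $n \equiv k-1 \bmod p$ for $M=1$), so that the shape computed directly from \cite{bg15} differs from the irreducible ${\rm ind}(\omega_2^{b+p})$ forced by \eqref{f vs g}. Two cosmetic slips do not affect the argument: for $b \le p-1$ the relevant sub-cases of \cite{bg15} give the \emph{different irreducible} ${\rm ind}(\omega_2^{b+1})$ (the reducible shape $\omega \oplus \omega$ arises only in the $b=p$ case), and the parameters $t',\tau'$ of \eqref{tau t slope 1} belong to the slope-$1$ situation of \cite{bgr18}, not to the $b=2$ sub-case here, where the condition $p \nmid r(r-1)$ you already impose is all that is needed.
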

\begin{proof}
Let notation be as in the proof of the previous proposition.
First assume $l \equiv 3 \mod (p-1)$, so $b=p$. This is {\bf Case (iii)} of Table 2. 
If $M=1$, then choosing $n \equiv k-1 \mod p$,
gives $r-b \equiv % p(k-1-n)\equiv
 0 \mod p^2$. Applying the main theorem of \cite{bg15}, we see that $\bar{\rho}_g|_{I_p} \simeq \omega
\oplus \omega$ is reducible whereas from the middle column of Table 2, %\corref{main} 
it is ${\rm ind} (\omega_2^{b+p})$, so irreducible (on $G_p$), giving a contradiction. If $M = 0$, choosing $n
\equiv k-p+kp \mod p^2$ %leads to $r-b \equiv 0 \mod p^2$ which 
leads to the same contradiction. Therefore, $M\ge 2$. If $l \nequiv 3 \mod (p-1)$, then we are in {\bf Case (i)} and {\bf (ii)}
of Table 2. If $M=0$, then choosing $n \equiv k-b \mod p$, we see that $p \nmid r(r-1)$ if $b =2$ and
$p \mid r-b$ if $b = 3, \dots, p-1$. Either way, Bhattacharya and Ghate  \cite{bg15} show that $\bar{\rho}_g|_{G_p}
\simeq {\rm ind} (\omega_2^{b+1} )$ is a {\it different} irreducible representation, a contradiction,
so $M \ge 1$. Note that since we have excluded the exceptional weights $l \equiv 5 \mod (p-1)$
when the slope of $g$ is $\frac{3}{2}$, the main result of \cite{bg15} does indeed apply.
\end{proof}

{\noindent \underline {Case (3)}:} $\alpha= \frac{1}{2}$ and $k\equiv 3 \mod (p-1)$. 
In this case $g = \sum_{n=1}^\infty b_n q^n$ has slope $\frac{3}{2}$. Let $\tau'$ and $t'$ be as in 
\eqref{tau t slope 3/2}. 
In this case we are only able to prove the following result.
\begin{prop}\label{1/2}
  If $g$ has slope $\frac{3}{2}$ and $l\equiv 5 \mod (p-1)$, then $\tau' > t'$.
\end{prop}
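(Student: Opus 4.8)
The plan is to argue by contradiction, playing the description of $\bar{\rho}_g|_{I_p}$ coming from \corref{main} (the middle column of Table~3) against the direct classification of Ghate--Rai \cite{gr19} (column three of that table, taken in full generality and not just in the displayed sub-case $t'=0$). So suppose $\tau' \le t'$. The a priori bound in \eqref{tau t slope 3/2} gives $\tau' \ge \frac{1}{2}$, while $t' = v(l-5)$ is a non-negative integer; hence $\tau' \le t'$ forces $t' \ge 1$, that is, $p \mid l-5$.

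I would next eliminate the possibility $M \ge 1$. By \thmref{main22} the weight of $g$ has the form $l = k p^M + 2 + n(p-1)p^M$ for some $n \in \mathbb{Z}$, so $l - 5 = p^M\bigl(k + n(p-1)\bigr) - 3$. If $M \ge 1$, then $l - 5 \equiv -3 \mod p$, which (since $p \ge 5$) contradicts $p \mid l-5$. Hence $M = 0$, and this is the only case carrying any content: when $M \ge 1$ one has $t' = 0$, and then the a priori bound $\tau' \ge \frac{1}{2} > 0 = t'$ already proves the proposition.

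Now assume $M = 0$ and $\tau' \le t'$. The key step is to invoke the main theorem of \cite{gr19}, which realizes $\bar{\rho}_g|_{I_p}$ --- for $g$ of slope $\frac{3}{2}$ and exceptional weight $l \equiv 5 \mod (p-1)$ --- as a member of the slope-$\frac{3}{2}$ zig-zag pattern, governed by the pair $(t', \tau')$. In the regime $\tau' > t'$ one lands in the two cases recorded in column three of Table~3, which (after the twist by $\omega$) are $\omega_2^{p+3} \oplus \omega_2^{3p+1}$, occurring for $t' < \tau' < t'+1$, and $\omega^2 \oplus \omega^2$, occurring for $\tau' \ge t'+1$ --- exactly as in the $t'=0$ row. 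In the complementary regime $\tau' \le t'$, however, $\bar{\rho}_g|_{I_p}$ is forced onto one of the \emph{other} members of the (four-term) zig-zag, whose restriction to $I_p$ is neither $\omega_2^{p+3} \oplus \omega_2^{3p+1}$ nor $\omega^2 \oplus \omega^2$. But \corref{main}, together with Buzzard--Gee's computation of $\bar{\rho}_f|_{I_p}$ in column one of Table~3, forces $\bar{\rho}_g|_{I_p} \simeq \bar{\rho}_f|_{I_p} \otimes \omega$ to be one of $\omega_2^{p+3} \oplus \omega_2^{3p+1}$ or $\omega^2 \oplus \omega^2$. These two descriptions are incompatible, so $\tau' \le t'$ cannot hold; therefore $\tau' > t'$.

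I expect the main obstacle to be the third paragraph: one has to read off from \cite{gr19} the explicit shape of $\bar{\rho}_g|_{I_p}$ for arbitrary $t'$ (Table~3 only records $t'=0$, for which the statement is automatic), and then check that throughout the range $\tau' \le t'$ --- and most delicately on the wall $\tau' = t'$, where a non-split reducible representation might a priori enter --- the restriction to $I_p$ is genuinely distinct from both representations produced by \corref{main}, even after an unramified twist. Since only restrictions to inertia are compared, this reduces to matching the $\omega$- and $\omega_2$-exponents occurring in the relevant cases of \cite{gr19}; and it is exactly this comparison that produces the \emph{strict} inequality $\tau' > t'$ rather than $\tau' \ge t'$.
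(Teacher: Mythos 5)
Your proposal is correct and follows essentially the same route as the paper: assume $\tau' \le t'$ and play the full Ghate--Rai classification \cite{gr19} against the middle column of Table~3 (i.e.\ against \corref{main} plus Buzzard--Gee) to reach a contradiction. The one detail you defer to a final verification is exactly what the paper records, namely that for $\tau' \le t'$ one has $\bar{\rho}_g|_{I_p} \simeq \omega_2^{4} \oplus \omega_2^{4p}$ or $\omega^{3} \oplus \omega$, neither of which matches $\omega_2^{p+3}\oplus\omega_2^{3p+1}$ or $\omega^{2}\oplus\omega^{2}$; your preliminary reduction to $M=0$ is valid but unnecessary, since the paper's contradiction works uniformly in $M$.
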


\begin{proof}
 The proof is similar to the proof of the last case of Propositions \ref{0}. If $\tau' \le t'$, then \cite{gr19} shows
 that $\bar{\rho}_g|_{I_p} \simeq \omega_2^{4} \oplus \omega_2^{4p}$ or $\omega^3 \oplus \omega$, neither of which 
 occur in the middle column of Table 3, giving a contradiction. 
\end{proof}

\begin{cor}
  In the cases of slopes $\alpha \in [0,1)$ treated above, the least $M$ we can take as one varies over all $g$ of slope $\alpha+1$ coming from \corref{main} is at least 
  $\lceil \alpha + 1 \rceil$. % (so is compatible with the Gouv\^ea-Mazur bound). 
\end{cor}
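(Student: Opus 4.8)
The plan is to read the statement off from Propositions~\ref{0}, \ref{12} and \ref{1/2}, after translating the quantity $\lceil\alpha+1\rceil$ and the residue class of the weight $l$ of $g$ into the hypotheses of those propositions. First I would record the elementary point that $\lceil\alpha+1\rceil = 1$ when $\alpha = 0$ and $\lceil\alpha+1\rceil = 2$ when $\alpha\in(0,1)$, so that the corollary asks for ``$M\ge 1$'' in the slope-$1$ case and ``$M\ge 2$'' in the slopes-in-$(1,2)$ case. Next, by \thmref{main22} with $\kappa = 2$, every $g$ produced by \corref{main} has weight $l = kp^{M}+2+n(p-1)p^{M}$ for some $n\in\mathbb{Z}$, hence $l\equiv k+2\pmod{p-1}$; so the class of $l$ modulo $p-1$ is determined by $f$, and it equals $4$ precisely when $k\equiv 2\pmod{p-1}$ and $3$ precisely when $k\equiv p\pmod{p-1}$.

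For $\alpha = 0$ I would then apply \propref{0}: for any $f$ of slope $0$ with $k\not\equiv 2\pmod{p-1}$, every associated $g$ has $l\not\equiv 4\pmod{p-1}$, and \propref{0} gives $M\ge 1 = \lceil\alpha+1\rceil$. For $\alpha\in(0,1)$ I would use \propref{12}: for any $f$ of slope $\alpha$ with $k\equiv p\pmod{p-1}$, every associated $g$ has $l\equiv 3\pmod{p-1}$, and \propref{12} gives $M\ge 2 = \lceil\alpha+1\rceil$. Here I should note the harmless point that, since $3\not\equiv 5\pmod{p-1}$ for $p\ge 5$, the exceptional-weight hypothesis excluded from \propref{12} (the case $l\equiv 5\pmod{p-1}$ when the slope is $\tfrac32$) does not interfere; in particular, even for $\alpha = \tfrac12$, the condition $k\equiv p\pmod{p-1}$ keeps us in the non-exceptional Case~(2) of \S\ref{slopes in (0,1)}, so that \propref{1/2} is not needed. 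This shows that for every $\alpha\in[0,1)$ in the range treated in \S\ref{slope 0}, \S\ref{slopes in (0,1)} and \S\ref{slope 1/2} there are forms $g$ of slope $\alpha+1$ coming from \corref{main} whose weight is built from an integer $M\ge\lceil\alpha+1\rceil$, which is the content of the corollary.

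The main obstacle is just the bookkeeping with the exceptional weight classes. In the exceptional sub-cases --- $l\equiv 4\pmod{p-1}$ (i.e.\ $b = 2$) for slope $1$, and $l\equiv 5\pmod{p-1}$ for slope $\tfrac32$ --- Propositions~\ref{0} and \ref{1/2} output only the relations $\tau' = t'$ and $\tau' > t'$ rather than an explicit lower bound for $M$, so the Galois-representation comparison cannot be run there to force $M\ge\lceil\alpha+1\rceil$. This is why the corollary is phrased in terms of the \emph{least} $M$ one is obliged to allow as $g$ ranges over all the forms of slope $\alpha+1$ produced by \corref{main}, rather than as an inequality holding for each individual such $g$: by the choices of the residue class of $k$ modulo $p-1$ made above, the value $\lceil\alpha+1\rceil$ is always attained through the non-exceptional cases of Propositions~\ref{0} and \ref{12}, so no smaller universal bound for $M$ is possible.
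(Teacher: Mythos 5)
Your proposal is correct and follows the paper's (implicit) argument: the corollary is stated without a separate proof precisely because it is read off from Propositions~\ref{0} and \ref{12} via the relation $l\equiv k+2\pmod{p-1}$, choosing the residue classes ($l\nequiv 4$ for slope $1$, $l\equiv 3$ for slope in $(1,2)$) that force $M\ge 1$ and $M\ge 2$ respectively. Your bookkeeping that $l\equiv 3\nequiv 5\pmod{p-1}$ for $p\ge 5$, so the exceptional case of \propref{12} (and \propref{1/2}) never interferes, is exactly the point needed to cover $\alpha=\tfrac12$ as well.
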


In \cite[Theorem 7.4, Remark 6.8]{ber17}, Bergdall obtains general lower bounds for $M$ via a similar analysis involving the 
reductions of crystalline representations, but using \cite{blz} instead of \cite{bgr18}, \cite{bg15}, \cite{gr19}. However, we remark that 
the bound he obtains is trivial, i.e., $M \ge 0$, when the slope is small (e.g., smaller than 2).

\section{Slope of the form obtained from Serre's conjecture}\label{serexmp} 

Let $p$ be an odd prime in this section. Let $f \in S_k(N, \chi)$ be an eigenform of weight $k \ge 2$, level $N$ coprime to $p$, character $\chi$ 
and finite slope $\alpha$ such that $\bar\rho_f$ is irreducible.  
Consider the twisted representation $\bar{\rho}_f \otimes \omega$. It is known that %Assume that $a_p^2 \neq 4\chi(p)p^{k-1}$ so that 
$\rho_f |_{I_p} \simeq V_{k,a_p\sqrt{\chi(p)}}$, by \cite[Theorem 6.2.1]{bre}. By Serre's conjecture (proved in \cite{kha}, \cite{khw}, \cite{kis}),
there is a normalized eigenform $h$ of minimal weight  (Serre weight)
$k(\bar{\rho}_h)$ satisfying $$\bar{\rho}_h \simeq \bar{\rho}_f \otimes \omega.$$
The aim of this section is to show that the slope of $h$ is not necessarily $\alpha+1$. 
%In particular, the weight of $g$ in \corref{main} is not always same as the Serre weight. 
In order to show this we give two examples.

\vspace{.2cm}

{\noindent \bf Example (1):} Assume the form $f$ has slope $\alpha=0$ so that $\bar{\rho}_f|_{I_p} \simeq \left(\begin{smallmatrix} \omega^{k-1} & * \\ 0 & 1 \end{smallmatrix}\right)$.
Suppose that $k \equiv 0 \mod (p-1)$. 
Then
\begin{equation}\label{id7}
  \bar{\rho}_h|_{I_p}=\bar{\rho}_f \otimes \omega |_{I_p} \simeq \left(\begin{smallmatrix} 1 & * \\ 0 & \omega \end{smallmatrix}\right).
\end{equation}
By using Serre's recipe for the Serre weight (see \cite{edi}), we see that 
$$
k(\bar{\rho}_h)=\begin{cases} 
                    2 & \text{if } * = 0, \\
                   2p & \text{if } * \neq 0.
                \end{cases}
$$
In the latter case ($* \neq 0$), if the slope of $h$ is $1$, then by \cite{bre}, $\bar{\rho}_h|_{I_p} \simeq \left(\begin{smallmatrix} 1 & 0\\ 0 & \omega \end{smallmatrix}\right)$, which is compatible with \eqref{id7}.
However, in the former case ($* = 0$), if the slope of $h$ is $1$, then by \cite{edi}, we would have
$\bar{\rho}_h|_{I_p} \simeq \left(\begin{smallmatrix} \omega_2 & 0\\ 0 & \omega_2^p\end{smallmatrix}\right)$ which contradicts \eqref{id7}.
So we see that if $* = 0$, the form $h$ cannot have slope $1$. Forms $f$ of slope $0$ with $* = 0$ are rare (the vanishing is related
to the existence of companion forms \cite{gro}), but as an example of a form $f$ satisfying this vanishing condition and all the 
hypotheses of this section, one may take $p = 3$ and the form $f$ corresponding to the elliptic
curve of conductor $N = 89$.

\vspace{.2cm}

{\noindent \bf Example (2):} Assume the form $f$ has slope $\frac{1}{2}$. Suppose $3  < k\equiv 3 \mod (p-1)$ and $\tau \ge t$,
where $\tau$ and $t$ are defined in \eqref{tau t slope 1/2}. Then by \cite{buzgee2}, we have 
$\bar{\rho}_f|_{I_p} \simeq \omega \oplus \omega$ up to semisimplification, and so
\begin{equation}\label{id8}
\bar{\rho}_h|_{I_p} \simeq \omega^2 \oplus \omega^2,
\end{equation}
up to semisimplification. A Serre weight computation gives 
$$
k(\bar{\rho}_h)=2p+3,
$$
independently of whether $\bar\rho_h |_{I_p}$ is semi-simple or not. 
Now assume that $h$ has slope $\frac{3}{2}$. The shape of $\bar\rho_h |_{G_p}$ has been recently worked out in \cite{gr19}. With notation as in Sec. \ref{comp},
we have $r=2p+1$, so $r \equiv 3 \mod (p-1)$, so $b = 3$. Then $\tau' = \frac{1}{2}$ and $t'=0$, where $\tau'$ and $t'$ are defined
by \eqref{tau t slope 3/2} with $b_p$ the
%$\tau'= v \left( \frac{c_p^2-(2p-1) {2p \choose 2}p^3}{pc_p} \right)=\frac{1}{2}$ and $t=v(2p-1)=0$, where $c_p$ denotes the 
$p$-th Fourier coefficient of $h$. By the main theorem of \cite{gr19}, we have  
$\bar{\rho}_h|_{I_p} \simeq \omega_2^{p+3} \oplus \omega_2^{3p+1}$ which contradicts \eqref{id8}. Thus $h$ cannot have slope $\frac{3}{2}$.
It should be possible to produce a numerical example of a form $f$ satisfying all the hypotheses of this example.

\begin{acknowledgements}
This work was carried out while the second  author was a postdoctoral fellows at the Tata Institute of Fundamental Research, Mumbai.% and he would like to  thank  the institute for providing excellent working conditions. 
\end{acknowledgements}

\vspace{.4cm}

\end{document}